\documentclass{amsart}[11pt]

\def\volume{\operatorname{vol}}

\def\op{\operatorname}

\usepackage[usenames,dvipsnames]{color}
\usepackage{amsmath,amssymb,bm,amsthm, amsfonts, mathrsfs, eucal, epsfig}
\usepackage{graphicx}
\usepackage{url}

\makeatletter 
\@addtoreset{equation}{section}
\makeatother  

\begin{document}

\newtheorem{Thm}{Theorem}[section]
\newtheorem{Def}[Thm]{Definition}
\newtheorem{Lem}[Thm]{Lemma}
\newtheorem{Rem}[Thm]{Remark}
\newtheorem{Prop}[Thm]{Proposition}
\newtheorem{Cor}[Thm]{Corollary}
\newtheorem{sublemma}{Sub-Lemma}

\newtheorem{Example}{Example}[section]
\newcommand{\g}[0]{\textmd{g}}
\newcommand{\pr}[0]{\partial_r}
\newcommand{\dif}{\mathrm{d}}
\newcommand{\bg}{\bar{\gamma}}
\newcommand{\md}{\rm{md}}
\newcommand{\cn}{\rm{cn}}
\newcommand{\sn}{\rm{sn}}
\newcommand{\seg}{\mathrm{seg}}

\newcommand{\Ric}{\mbox{Ric}}
\newcommand{\Iso}{\mbox{Iso}}
\newcommand{\ra}{\rightarrow}
\newcommand{\Hess}{\mathrm{Hess}}
\newcommand{\RCD}{\mathsf{RCD}}

\title[Geometric transformation theorem]{Geometric transformation theorem, fundamental groups and monotone of numbers of almost Euclidean factors of geodesic balls}
\author{Lina Chen}
\address[Lina Chen]{School of Mathematics and Statistics, Nanjing University of Science and Technology, Nanjing China}

\email{chenlina\_mail@163.com}
\thanks{Supported partially by NSFC Grant 12471046, Jiangsu NSF Grant BK20240185 and the Fundamental Research Funds for the Central Universities No. 30923010201.} 

\maketitle

\begin{abstract}

\setlength{\parindent}{10pt} \setlength{\parskip}{1.5ex plus 0.5ex
minus 0.2ex} 
In \cite{HH}, H. Huang-X.Huang introduced the generalized Reifenberg condition which describes the non-increasing property of numbers of almost Euclidean factors of geodesic balls and gave a transformation theorem under this condition. In this note, we will prove a transformation theorem under a non-decreasing property compared with the non-increasing property above and give an example that transformation theorem is false without the monotone property. By these transformation theorems, as the main results in \cite{H}, we will show that
for an open manifold with nonnegative Ricci curvature, if its universal cover is polar at infinity and the number of almost Euclidean factors of geodesic balls in the universal cover is monotone, then its fundamental group is finitely generated and virtually abelian. 
 \end{abstract}

\section{Introduction}

Transformation technique for almost splitting maps originates from Cheeger-Naber (\cite{ChN}) where it plays a key role in proving the codimension 4 conjecture. And in \cite{CJN}, Cheeger-Jiang-Naber gave a geometric transformation theorem which says that under some geometric conditions, if a manifold $(M, p)$ with almost nonnegative Ricci curvature has uniform volume lower bound of a unit geodesic ball $B_1(p)$, i.e., non-collapsing,  then a $(\delta, k)$-splitting map of $B_1(p)$ is still a $(\epsilon, k)$-splitting map in smaller scales $B_r(p)$, $r\ll 1$, up to a transformation. For $\op{RCD}$-spaces, geometric transformation theorem with non-collapsing condition also holds (\cite{BNS}). 
In \cite{HH}, H. Huang-X. Huang gave a transformation theorem in collapsing case under a $(\Phi, r_0; k,\delta)$-generalized Reifenberg condition. And in \cite{HH}, they asked whether that $B_r(p)$ is $(\delta, k)$-Euclidean for $r\in (0,1]$  where $\delta$ is sufficient small could guarantee transformation theorem. In this note, we will provide another sufficient condition for transformation theorem (Theorem~\ref{trans}) and give a negative answer to their question (Theorem~\ref{main-2}). 

Recall that in \cite[Definition 1.4]{HH}, for an $n$-manifold $M$ and $p\in M$, one says that $p$ satisfies $(\Phi, r_0; k,\delta)$-generalized Reifenberg condition if there exist $\delta>0$, $k\in \Bbb Z^+$, $r_0>0$ and a function $\Phi: \Bbb R^+\to \Bbb R^+$ with $\lim_{\delta'\to 0}\Phi(\delta')=0$ such that for $0<r<r_0$, 
\begin{equation}\Theta_{p, k'}(r)\leq \Phi(\max\{\delta, \theta_{p, k'}(r)\}), \,\forall\, k'\geq k, \label{non-inc}\end{equation}
where
\begin{equation*}\theta_{p, k}(r)=\inf\{\epsilon>0 | B_r(p) \text{ is } (\epsilon, k)\text{-Euclidean}\}, \end{equation*}
$$\Theta_{p, k}(r)=\sup_{s\in [0, r]}\theta_{p, k}(s).$$ 
And we say $B_r(p)$ is $(\delta, k)$-Euclidean if there is a metric space $(Z, d_Z)$ such that
$$d_{GH}(B_r(p), B_r((0, x)))\leq \delta r,$$
where $(0, x)\in \Bbb R^k\times Z$. This generalized Reifenberg  condition \eqref{non-inc} implies that the number of almost Euclidean factors of $B_r(p)$ in the Gromov-Hausdorff sense is non-increasing for $r\in (0, r_0)$. Similarly, we define the non-decreasing property as following:
for any $r\in (r_0, r_1)$ where $r_1$ may be infinity, let
$$\hat \Theta_{p, k}(r)=\sup_{s\in [r, r_1)}\theta_{p, k}(s),$$
and assume that
\begin{equation}\hat\Theta_{p, k'}(r)\leq \Phi(\max\{\delta, \theta_{p, k'}(r)\}), \,\forall\, k'\geq k. \label{non-dec}\end{equation}

The first result about transformation theorem is 
\begin{Thm} \label{trans}
Given $n, \epsilon>0$, integer $k$, and a function $\Phi: \Bbb R^+\to \Bbb R^+$ with $\lim_{\delta'\to 0}\Phi(\delta')=0$, there are $\delta'=\delta'(n, \epsilon, \Phi), \delta(n, \epsilon, \Phi)>0$ such that for $0<\delta<\delta(n, \epsilon, \Phi)$, for a complete $n$-manifold with $\op{Ric}_M\geq -(n-1)\delta$,  if for some $p\in M$  and $0<r_0<r<2$,
\eqref{non-dec} holds and $B_r(p)$ is $(\delta, k)$-Euclidean, then there is  $u: B_2(p)\to \Bbb R^k$ which is a $(\delta', k)$-splitting map such that for each $r_0< s\leq 1$, there exists a $k\times k$ lower triangle matrix $T_s$ with positive diagonal entries such that
 $ T_{s} u$ is a $(\epsilon, k)$-splitting map on $B_s(p)$ and for any $t, s\in [r_0, 1]$, for some $C=C(n)>1$,
$$|T_t\circ T_{s}^{-1}|\leq (1+C\epsilon)\max\left\{(\frac{t}{s})^{C\epsilon}, (\frac{s}{t})^{C\epsilon}\right\}.$$ 
\end{Thm}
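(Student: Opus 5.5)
We follow the scheme of Cheeger--Jiang--Naber and Huang--Huang, with the direction of scales reversed: the monotonicity of Euclidean factors is now used from small scales up to large scales. First, since $B_r(p)$ is $(\delta,k)$-Euclidean, \eqref{non-dec} gives $\hat\Theta_{p,k}(r)\le \Phi(\delta)$, so for every $s\in[r,r_1)$ the ball $B_s(p)$ is $(\Phi(\delta),k)$-Euclidean. Taking $\delta$ small makes every ball $B_s(p)$ with $s\in[r_0,2]$ at scale $\ge r$ $(\delta_1,k)$-Euclidean, where $\delta_1$ is as small as we like. (I read the hypothesis as giving this control on all scales in $(r_0,2]$ that are relevant; if only $s\ge r$ are controlled, one applies the argument with $r$ close to $r_0$.) By Cheeger--Colding almost splitting and the standard construction via harmonic approximations of distance functions, we obtain a $(\delta',k)$-splitting map $u:B_2(p)\to\Bbb R^k$ with $\delta'=\delta'(n,\epsilon,\Phi)\to0$ as $\delta\to0$.

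Next we construct $T_s$ inductively on dyadic scales $s_j=2^{-j}$, $j=0,1,\dots$, down to $r_0$. Set $T_{s_0}=I$. Suppose $T_{s_j}u$ is an $(\epsilon,k)$-splitting map on $B_{s_j}(p)$. Let $A_j$ be the Gram--Schmidt lower triangular matrix with positive diagonal that orthonormalizes $\fint_{B_{s_{j+1}}(p)}\langle\nabla (T_{s_j}u)^a,\nabla(T_{s_j}u)^b\rangle$, and set $T_{s_{j+1}}=A_jT_{s_j}$. The key estimate is the contradiction/compactness lemma used in \cite{CJN}, \cite{HH}: if $v$ is an $(\epsilon,k)$-splitting on $B_{2s}$ and $B_s$ is $(\delta_1,k)$-Euclidean, but no $(\delta_1,k+1)$-Euclidean at scale $s$ forces extra symmetry, then the normalized map $Av$ is an $(\epsilon/2$ or better$)$-splitting on $B_s$ with $|A-I|\le C\epsilon$. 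The $(k+1)$ part of \eqref{non-dec}, applied with $k'>k$, prevents a new Euclidean factor from appearing at a smaller scale while it is absent at larger scales. This is exactly what rules out the degeneration of the Hessian estimate in the limit. In the limit argument one obtains a harmonic map on a limit space $\Bbb R^k\times Z$ with $\fint|\Hess|^2$ small on all scales; by Liouville-type rigidity it is linear on the $\Bbb R^k$ factor. Therefore the normalization matrices converge to orthogonal matrices, which gives $|A_j-I|\le C\epsilon$ and closes the induction. For intermediate $s$ we use $T_s=T_{s_j}$ when $s_{j+1}<s\le s_j$, and adjacent-scale comparability costs only a factor $1+C\epsilon$.

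Finally, the composition bound follows from $T_t T_s^{-1}=\prod A_j^{\pm1}$ over the dyadic scales between $t$ and $s$. There are about $\log_2(s/t)$ of them, so
$$|T_tT_s^{-1}|\le (1+C\epsilon)\prod(1+C\epsilon)\le(1+C\epsilon)(s/t)^{C\epsilon}.$$

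The main obstacle is the inductive one-step lemma. We need the improvement to be uniform, i.e.\ the error must not accumulate over many scales, which calls for the Cheeger--Jiang--Naber-style argument that splitting error at a given scale is controlled by the Euclidean-ness $\theta_{p,k}$ at that scale rather than inherited. I would first try the contradiction argument using \eqref{non-dec} with $k'=k+1$, to exclude the situation where the limit normalized map fails to be a splitting because of an additional $\Bbb R$ factor in the limit at that scale.
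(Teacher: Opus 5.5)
Your proposal has a genuine gap. You take an \emph{arbitrary} $(\delta',k)$-splitting map $u$ on $B_2(p)$, and you use \eqref{non-dec} in the wrong direction.

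Condition \eqref{non-dec} forbids Euclidean factors from appearing as the scale \emph{decreases}, but it allows them to disappear. So $B_s(p)$ may be $(\delta_0,k+l)$-Euclidean for $s$ near $2$, yet not even $(\delta_0,k+1)$-Euclidean for $s$ near $r_0$. The CJN/Huang--Huang blow-up you invoke (first bad scale, then a limit harmonic map with $|\nabla v|\le C d^{C\epsilon}+C$ on $\Bbb R^k\times Z$, then Liouville) needs the tangent cone at infinity of the limit not to split $\Bbb R^{k+1}$. That tangent cone reflects the \emph{larger} original scales, and under \eqref{non-dec} this is exactly what can fail. The limit may then carry harmonic functions of almost linear growth that are asymptotically linear in the extra directions but are not linear functions of the $\Bbb R^k$-factor.

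A product picture illustrates why no normalization rescues a badly chosen $u$. Take $\Bbb R^k\times N$, with $N$ close to $\Bbb R^l\times Z'$ at scale $2$ but with no almost-lines at small scales. Let $u=(w,x^1,\dots,x^{k-1})$, where $w$ is a harmonic function on $N$ close to an $\Bbb R^l$-coordinate at scale $2$. Then $(T_su)^1=t_{11}w$ can never be an $(\epsilon,1)$-splitting function on small $B_s(p)$. Otherwise $(t_{11}w,x^1,\dots,x^k)$ would be a $(C\epsilon,k+1)$-splitting map, and $B_s(p)$ would be almost $(k+1)$-Euclidean. So your one-step lemma cannot hold for general $u$.

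Separately, going from $\epsilon$ on $B_{2s}$ to $\epsilon/2$ on $B_s$ cannot come from information at a single scale. In CJN the non-accumulation comes from the contradiction at the first bad scale, which uses all the good scales above it.

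The missing idea is that $u$ must be \emph{selected}. The paper uses \eqref{non-dec} only to get a monotone staircase of scales. With thresholds $\tau_j$ and
$s^{j}=\sup\{s: B_r(p) \text{ is not } (\tau_j,j)\text{-Euclidean for } r\in[r_0,s)\}$,
there are at most $n$ scales where the number of factors jumps.

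Theorem~\ref{main-1} handles one jump, from $k+l$ factors (scales $\ge r_2$) to $k$ factors (scales in $(r_1,r_2)$):
\begin{itemize}
\item It starts from a $(\delta_0,k+l)$-splitting map $\tilde u$ and normalizes it down to $r_2$ by \cite[Theorem 4.1]{HH}.
\item It blows up at $r_2$ and applies the local gap Proposition~\ref{tech}. After subtracting the parts linear on the $\Bbb R^k$-factor, at most $l$ of the remainders are linearly independent. Otherwise there would be $k+l+1$ independent linear-growth harmonic functions, hence a $(k+l+1)$-splitting tangent cone at infinity, contradicting that large scales are not $(\eta,k+l+1)$-Euclidean.
\item Hence some $k$ linear combinations of $\tilde u$ are linear on the $\Bbb R^k$-factor. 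These define $u$, they remain splitting just below $r_2$, and Huang--Huang's non-degenerate transformation theorem takes over on $[r_1,r_2)$.
\end{itemize}
Iterating over the jumps gives the theorem. Your final dyadic product estimate for $|T_tT_s^{-1}|$ is fine once the per-scale statements are available.
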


Theorem~\ref{trans} and \cite[Theorem 1.8]{HH} implies that if the number of almost Euclidean factors of $B_r(p)$ in the Gromov-Hausdroff sense is monotone respect to $r$, then transformation theorem holds. And examples can be constructed by using \cite{CN}'s results which shows that if the monotonic property does not hold, transformation theorem would be false. 

\begin{Thm}\label{main-2}
Given $n\geq 5$, there is $\epsilon>0$ such that for any $\delta> 0$, there is a complete $n$-manifold $M$ satisfying that $\op{Ric}_{M}\geq -(n-1)\delta$ and
 $B_s(p)$ is $(\delta, k)$-Euclidean for $0< s\leq 2R$ and 
 there is no $(\epsilon, k)$-splitting map $u: B_{2R}(p)\to \Bbb R^k$ such that for each $0< s\leq 2R$, there exists a $k\times k$ lower triangle matrix $T_s$ with positive diagonal entries such that
$T_su: B_s(p)\to \Bbb R^k$ is a $(\epsilon, k)$-splitting map.
\end{Thm}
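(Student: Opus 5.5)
We build the counterexample from Colding--Naber's examples \cite{CN}. They construct, for $n\geq 5$, complete manifolds with $\op{Ric}\geq 0$, or arbitrarily small negative lower bound after rescaling, whose tangent cones at a point (or at infinity) change along the scales. The plan is to arrange a point $p$ with the following behaviour. On every scale $0<s\leq 2R$, the ball $B_s(p)$ is $(\delta,k)$-Euclidean. However, the $\Bbb R^k$-factor \emph{rotates} as $s$ varies inside an ambient space with more room, e.g.\ a cone $C(X)$ of dimension $\geq k+1$ that is Gromov--Hausdorff close to $\Bbb R^k\times Z$ in different ways at different scales. Meanwhile, the number of almost Euclidean factors fails to be monotone in $s$: it is $k$ at most scales but jumps to $k+1$ on some intermediate band. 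Neither \eqref{non-inc} nor \eqref{non-dec} then holds, and Theorem~\ref{trans} does not apply.

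\emph{Step 1 (the model).} Take the Colding--Naber family of metrics on $\Bbb R^{m}$ (with $m=n-1$ or a product with a circle so that the total dimension is $n\geq5$). In this family the cross section $X_s$ of the tangent cone at scale $s$ moves along a smooth curve $\gamma(s)$ in the space of cross sections. Fix two cross sections $X_a,X_b$. Each of $C(X_a)$ and $C(X_b)$ splits off an $\Bbb R^k$, but the two $\Bbb R^k$-directions are transverse. Along the path from $X_a$ to $X_b$ there are intermediate cones splitting $\Bbb R^{k+1}$ which contain both directions. Because the path passes through cones that are $(\delta,k)$-Euclidean throughout, every $B_s(p)$ is $(\delta,k)$-Euclidean. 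Scaling the metric makes the Ricci lower bound $-(n-1)\delta$.

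\emph{Step 2 (no transformed splitting map).} Suppose $u:B_{2R}(p)\to\Bbb R^k$ is an $(\epsilon,k)$-splitting map and, for each $s$, some $T_s u$ is an $(\epsilon,k)$-splitting on $B_s(p)$. Lower-triangular transformations with positive diagonal preserve the span of the components of $u$, i.e.\ the $k$-plane of gradient directions $\op{span}\{\nabla u^i\}$. Since $u$ is a splitting map on the large ball, $\nabla u$ is close in $L^2$-average to the $\Bbb R^k$-factor of $C(X_b)$. On a small scale $s$, after rescaling, $T_su$ converges to a splitting map onto the $\Bbb R^k$-factor of $C(X_a)$. The key fact we would use is that the level sets, or equivalently the span of $\nabla(T_su)$, are the same as those of $u$, up to the harmonic-function approximation. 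Comparing at the transition scale, the span of the linear parts would have to equal both transverse $k$-planes up to $C\epsilon$. For $\epsilon$ smaller than the fixed angle between them this is a contradiction, and it fixes $\epsilon=\epsilon(n)$.

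\emph{Main obstacle.} The hardest part is making Step 2 rigorous. The map $u$ is a single harmonic-type function, and its restriction to small balls need not look like the restriction of the large-scale linear function. Indeed, in the non-collapsed theory the gradients can rotate slowly, which is exactly why $T_s$ is needed. We would control this via the almost-splitting stability: on each dyadic annulus of scales where the cone is $(\delta,k+1)$-Euclidean but not for $k+1$ beyond, one gets a rigid $\Bbb R^k$-plane. We would then show that the total rotation a lower-triangular renormalization can absorb is only scaling within the span, never a change of the span. The quantitative transversality of the two factors of the Colding--Naber cones, which is where $n\geq5$ enters, then yields the contradiction.
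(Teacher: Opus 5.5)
Your Step~1 asserts a construction but does not build one, and the notion it relies on is not well defined. The Colding--Naber families consist of smooth closed cross sections. A cone over a smooth cross section splits a line only if the cross section is the round sphere, since a spherical suspension is singular at its poles otherwise. So cones of the form $\Bbb R^k\times C(\cdot)$ occur only in the \emph{closure} of such a family. Nothing in \cite{CN} gives you a path that stays $(\delta,k)$-Euclidean at every scale while passing through an $\Bbb R^{k+1}$-cone and turning the factor.

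More fundamentally, the statement that ``the $\Bbb R^k$-factors of $C(X_a)$ and $C(X_b)$ are transverse'' has no meaning until you say how directions at different scales are compared. The only comparison available is through harmonic almost splitting maps carried across scales, which is exactly what the theorem is about.

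The paper avoids this with a product $\tilde M=M_\delta\times r_\delta^{-1}\Bbb H^m$. Here $M_\delta$ is a rescaled Colding--Naber manifold that is $\delta$-close to $\Bbb R^k\times C(Y)$ on scales $[1,2R]$ but not $(\eta,1)$-Euclidean on a window $[\tau,r]$. The factor $r_\delta^{-1}\Bbb H^m$ is $\delta$-Euclidean on scales $\le 2$ but $\eta$-far from Euclidean at scale $R$. Every $B_s(\tilde x)$ with $0<s\le 2R$ is then $(\delta,\min\{k,m\})$-Euclidean, and ``transverse'' acquires a concrete meaning: the Euclidean directions come from different factors. By the gap theorem \cite[Theorem 3.8]{HH} with $\epsilon(\eta)<\eta$, an almost splitting map on $B_{2R}$ is close to a function of the first factor. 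On $[\tau,r]$, almost splitting directions can only come from the second factor. This is also where $n\ge 5=3+2$ comes from, not from any transversality inside a single cone.

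Step~2 also fails as stated. A lower triangular $T_s$ preserves the span of the components of $u$, and even the pointwise span of $\nabla u^1,\dots,\nabla u^k$. That does not lock the $k$-plane which $u$ represents on the scale-$s$ tangent cone. The gradient plane field of a single harmonic map can rotate from scale to scale with no renormalization at all: a component along a direction that stops splitting at smaller scales decays faster than a component along a surviving direction.

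For instance, take $k=1$ and suppose $u\approx x_2+\alpha x_1$ at the intermediate scale, where $x_1$ is the direction that survives at small scales. Once $\|\nabla x_2\|_{L^2(B_s)}\ll|\alpha|$, the normalized $u$ is close to $x_1$ and is a splitting map on $B_s$. So your claim that the span of the linear parts must match both $k$-planes up to $C\epsilon$ does not follow. Your plan for the main obstacle, showing that renormalization ``never changes the span'', targets a statement that is trivially true but irrelevant.

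What you actually need is a scale at which every normalized element of $\op{span}(u)$ has scale-invariant $L^2$-Hessian bounded below. In other words, you need a quantitative reason why the large-scale direction still dominates at a scale where it has already stopped splitting. The product structure is what makes such an estimate accessible. For functions $f$, $h$ on the separate factors, $|\nabla(f+h)|^2=|\nabla f|^2+|\nabla h|^2$ and $|\Hess(f+h)|^2=|\Hess f|^2+|\Hess h|^2$. In addition, the gap theorem at scale $2R$ forces the second-factor part of $u$ to be small.
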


By Theorem~\ref{trans}, comparing with the main results in \cite{H}, a similar argument could give
\begin{Thm}\label{main}
Given $n\geq 2$ and a function $\Phi: \Bbb R^+\to \Bbb R^+$ with $\lim_{\delta'\to 0}\Phi(\delta')=0$, there is $\delta(n, \Phi)>0$ such that for $0<\delta<\delta(n, \Phi)$ and for a complete $n$-manifold $M$ with non-negative Ricci curvature $\op{Ric}_M\geq 0$ and its universal cover $\tilde M$, if
\eqref{non-dec} holds for some $\tilde p\in \tilde M$, $k=0$ and any $r>1$, and 
\begin{equation}\forall\, \tilde x\in \partial B_r(\tilde p),\,d(\tilde x, \partial B_{2r}(\tilde p))\leq (1+\delta)r, \label{pole}
\end{equation}  
then the fundamental group $\pi_1(M)$ is finitely generated and virtually abelian.
\end{Thm}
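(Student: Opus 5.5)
We follow the strategy of \cite{H}, where the analogous statement is proved under the generalized Reifenberg condition \eqref{non-inc}, using Theorem~\ref{trans} in place of the transformation theorem of \cite{HH}. Fix the quantity $k_\infty$, the largest $k'$ such that $\theta_{\tilde p,k'}(r)\to 0$ along $r\to\infty$ after rescaling by $r^{-1}$. Because \eqref{non-dec} holds with $k=0$ for all $r>1$, every asymptotic cone of $\tilde M$ at $\tilde p$ then splits off exactly the same Euclidean factor $\Bbb R^{k_\infty}$. Moreover, $\Phi$ controls $\hat\Theta$, so $\theta_{\tilde p,k_\infty}(s)\le \Phi(\delta)$ for all $s\ge r$ once $r$ is large. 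Condition \eqref{pole} (polar at infinity) implies that every asymptotic cone $(Y,y)$ is a metric cone $C(X)$ with vertex $y$, and is therefore of the form $\Bbb R^{k_\infty}\times C(X')$ with $C(X')$ containing no line.

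\textbf{Step 1: finite generation.} We argue by contradiction, as in Sormani's and Pan's approach. Suppose $\pi_1(M)$ is not finitely generated. Take a sequence of short generators $\gamma_i$ with $|\gamma_i|=d(\tilde p,\gamma_i\tilde p)=r_i\to\infty$. Rescale $(\tilde M, r_i^{-2}g,\tilde p,\pi_1(M))$ and pass to an equivariant limit $(Y,y,G)$. The pole condition makes the $\gamma_i$ converge to elements moving $y$ by distance $1$. The halfway-point argument (the ``uniform cut'' estimate of Sormani) together with the cone structure gives a contradiction.

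\textbf{Step 2: nilpotency and virtual abelianness via splitting maps.} Apply Theorem~\ref{trans} on rescaled balls $B_{R}(\tilde p)$, with $r_0=1$ and $R\to\infty$. This uses \eqref{non-dec} with $k=k_\infty$, which is inherited from the $k=0$ hypothesis since it holds $\forall k'\ge k$. The outcome is a single $(\delta',k_\infty)$-splitting map $u$ which, after the transformations $T_s$, remains $(\epsilon,k_\infty)$-splitting at every scale $s\in[1,R]$. The distortion of $T_s$ is bounded by $(R/s)^{C\epsilon}$.

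Following \cite{H}, we use $u$ to define, at all large scales simultaneously, a consistent ``translation part'' of the $\pi_1(M)$-action on the $\Bbb R^{k_\infty}$ factor. The equivariant limits on each cone $\Bbb R^{k_\infty}\times C(X')$ are handled as follows. Isometries of $C(X')$ fixing the vertex form a compact group, and in the limit the group acts on the $\Bbb R^{k_\infty}$ factor by translations up to this compact part. One then shows that a finite-index subgroup $\Gamma'$ acts, at every scale, almost by translations on $\Bbb R^{k_\infty}$. The polynomial bound on $|T_t T_s^{-1}|$ is what allows the scale-by-scale estimates to be glued: errors grow only like $s^{C\epsilon}$, not exponentially. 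This yields a homomorphism $\Gamma'\to\Bbb R^{k_\infty}$ with finite kernel (finiteness again uses that $C(X')$ contains no line together with the pole condition), and hence $\pi_1(M)$ is virtually abelian.

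\textbf{Main obstacle.} The hardest part is Step 2: showing that the limiting group actions at different scales are compatible, so that the translation homomorphisms agree across scales, and that the kernel is finite. I would first try to import \cite{H}'s argument verbatim. The only input there that used \eqref{non-inc} is the transformation theorem, which is now replaced by Theorem~\ref{trans}. The second place to check is that the direction of monotonicity (non-decreasing in $r$) matches the direction of the blow-down, $r\to\infty$, used in \cite{H}. Indeed, \eqref{non-dec} controls exactly the scales $s\ge r$, which is what a blow-down argument requires.
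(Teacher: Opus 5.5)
Your proposal has genuine gaps, and they start with its premises.

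\textbf{The premises.} Condition \eqref{pole} does not make asymptotic cones metric cones. The implication runs the other way: a metric cone is polar at its vertex, which is how the paper handles Corollary~\ref{max-vol}. In a limit with $\delta_i\to 0$ one only learns that the base point is a pole. The paper uses this solely to know that isometries of the line-free factor $X$ fix $x$, which is the hypothesis of Lemma~\ref{split}.

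Similarly, for fixed $\delta$, \eqref{non-dec} gives only $\Phi(\delta)$-almost splitting at larger scales. It does not give an exact factor $\Bbb R^{k_\infty}$ common to all asymptotic cones. The Euclidean rank of the cover can jump up with the scale (Case~2 in the paper), which is why the paper argues by contradiction with $\delta_i\to 0$.

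\textbf{Step 1.} More seriously, Step~1 makes no real use of \eqref{non-dec}. A halfway-lemma argument at a single blow-down scale, fed only by cone structure, would apply verbatim whenever $\tilde M$ has Euclidean volume growth, since all asymptotic cones are then metric cones by \cite{CC}. It would therefore settle the conjecture of \cite{PR}, which is open in general (the introduction notes it is known in dimension $4$).

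The missing mechanism is a multi-scale count:
\begin{enumerate}
\item[(a)] Take $N>n^2+1$ widely separated short-basis scales $r_{i,j_i^s}$. There Lemma~\ref{pan} makes the limit orbit disconnected, so by Lemma~\ref{split} every tangent cone of the quotient factor $Y_s$ at $y_s$ contains a line.
\item[(b)] This produces an intermediate scale at which the quotient $M$ gains a Euclidean factor.
\item[(c)] Lemma~\ref{split-2} carries the quotient's Euclidean factors down to the next smaller scale, losing at most the jump $t=k_s-k_{s-1}$ of the cover's rank.
\end{enumerate}
The resulting inequalities $l_{s-1}\ge l_s+1$, or $l_{s-1}\ge l_s+1-t$ in Case~2, cannot persist over that many scales.

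\textbf{Step 2.} Step~2 has the same defect. Theorem~\ref{trans} enters the paper only through Lemma~\ref{split-2}. There it is applied to the $\Gamma$-invariant lift $u\circ\pi$ of a splitting map on $M$, using \cite[Lemma 5.3]{HH}. This is the bridge from a hypothesis on $\tilde M$ to information about the quotient.

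A splitting map built directly on $\tilde M$, as in your sketch, is not equivariant. So $\gamma\mapsto u(\gamma\tilde p)-u(\tilde p)$ is not additive and gives no consistent translation part across scales. Moreover, any homomorphism $\Gamma'\to\Bbb R^{k_\infty}$ kills $[\Gamma',\Gamma']$. So ``finite kernel'' is just a restatement of virtual abelianness, and your sketch offers no argument for it.

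The paper's route is:
\begin{enumerate}
\item[(a)] Use \cite{KW} to make $\Gamma$ nilpotent.
\item[(b)] Show that in every equivariant asymptotic cone $(\Bbb R^k\times X,(0,x),G)$ the quotient factor $Y$ has a tangent cone at $y$ with no line. This again uses \eqref{non-dec} and Lemma~\ref{split-2}.
\item[(c)] Conclude $G(0,x)=\Bbb R^s$ from Lemma~\ref{split}.
\item[(d)] Make $s$ uniform using the connectedness of $\Omega(\tilde M,\tilde p,\Gamma)$ (Lemma~\ref{con}).
\item[(e)] Apply Pan's criterion, Theorem~\ref{pan2}.
\end{enumerate}
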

In particular, if the universal cover is Euclidean volume growth and \eqref{non-dec} holds for $k=0$ and $\tilde M$, then $\pi_1(M)$ is finitely generated and virtually abelian.
\begin{Cor} \label{max-vol}
Given $n\geq 2$ and a function $\Phi: \Bbb R^+\to \Bbb R^+$ with $\lim_{\delta'\to 0}\Phi(\delta')=0$, there is $\delta(n, \Phi)>0$ such that for $0<\delta<\delta(n, \Phi)$ and for a complete $n$-manifold $M$ with non-negative Ricci curvature $\op{Ric}_M\geq 0$ and its universal cover $\tilde M$, if
\eqref{non-dec} holds for some $\tilde p\in \tilde M$, $r>1$, $k=0$ and for some $v>0$,
\begin{equation}\lim_{r\to \infty}\frac{\volume(B_r(\tilde p))}{r^n}\geq v, \label{ecul}\end{equation}
 then the fundamental group $\pi_1(M)$ is finitely generated and virtually abelian.
\end{Cor}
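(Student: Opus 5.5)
The plan is to reduce Corollary~\ref{max-vol} to Theorem~\ref{main}: we check that Euclidean volume growth of $\tilde M$ gives the pole condition \eqref{pole} at all large scales. The non-decreasing condition \eqref{non-dec} with $k=0$ is assumed directly, so only \eqref{pole} needs proof.

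\textbf{Step 1: tangent cones at infinity.} Since $\op{Ric}_{\tilde M}\geq 0$ and $\lim_{r\to\infty}\volume(B_r(\tilde p))/r^n\geq v>0$, Cheeger--Colding theory applies. Every sequence $r_i\to\infty$ has a subsequence along which $(\tilde M, r_i^{-1}d, \tilde p)$ converges in the pointed Gromov--Hausdorff sense to a metric cone $(C(X), o)$. The convergence is non-collapsed, and the cone has vertex $o$ and cross section $X$ of diameter $\leq\pi$.

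\textbf{Step 2: the pole condition in a cone.} In $C(X)$, every point $x$ with $d(o,x)=1$ lies on the ray $t\mapsto tx$ from $o$. The point $2x$ then satisfies $d(x, 2x)=1$ and $d(o,2x)=2$. Hence $d(x,\partial B_2(o))=1$ for every $x\in\partial B_1(o)$, which is exactly \eqref{pole} with $\delta=0$ at scale $1$ on the cone.

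\textbf{Step 3: transfer back by contradiction.} Suppose \eqref{pole} fails for arbitrarily large $r$. Then there are $r_i\to\infty$ and $\tilde x_i\in\partial B_{r_i}(\tilde p)$ with $d(\tilde x_i,\partial B_{2r_i}(\tilde p))>(1+\delta)r_i$. Rescale by $r_i^{-1}$ and pass to a limit cone. The points $\tilde x_i$ converge to some $x$ with $d(o,x)=1$. Every point of $\partial B_{2r_i}(\tilde p)$ after rescaling lies at distance $>1+\delta$ from $\tilde x_i$.

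We would like the limit to give $d(x,\partial B_2(o))\geq 1+\delta$, contradicting Step 2. Distance spheres are not continuous under GH convergence in general, but here there is a direct fix. The point $2x$ in the cone is approximated by points $\tilde y_i$ with $r_i^{-1}d(\tilde p,\tilde y_i)\to 2$ and $r_i^{-1}d(\tilde x_i,\tilde y_i)\to 1$.

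Move $\tilde y_i$ along a minimal geodesic from $\tilde p$ to a point $\tilde y_i'$ exactly on $\partial B_{2r_i}(\tilde p)$ (extending slightly if needed). This changes the rescaled distance by $o(1)$, since on a manifold every point lies on a ray-like geodesic segment from $\tilde p$ of nearby length, and the geodesic can be continued because $\tilde M$ is complete and noncompact. Thus $d(\tilde x_i,\partial B_{2r_i}(\tilde p))\leq (1+o(1))r_i$, which is the contradiction.

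\textbf{Step 4: conclusion.} Steps 1--3 give $R_0$ such that \eqref{pole} holds for all $r\geq R_0$. Rescaling the metric by $R_0^{-1}$ preserves $\op{Ric}\geq 0$ and the scale-invariant condition \eqref{non-dec}, up to shifting the range $r>1$. Theorem~\ref{main} then shows that $\pi_1(M)$ is finitely generated and virtually abelian.

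The main obstacle is Step 3, specifically making sure a point can be placed exactly on $\partial B_{2r_i}(\tilde p)$ close to the approximant of $2x$.

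For the extension, I would use that $d(\tilde p,\cdot)$ is continuous and unbounded on the connected set $\tilde M$. Concretely, take a ray from $\tilde p$ passing near $\tilde y_i$: any point at distance $\approx 2r_i$ can be joined to a point of $\partial B_{2r_i}$ by moving an amount $o(r_i)$ along a minimal geodesic from $\tilde p$ to a point at distance $3r_i$ near the cone ray. Such a point exists because the cone ray through $x$ extends to $3x$.
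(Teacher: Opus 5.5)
Your reduction is correct, but it takes a different route from the paper. The paper never checks \eqref{pole} on $\tilde M$. It only notes, via Cheeger--Colding, that every tangent cone at infinity of $\tilde M$ is a metric cone, so its vertex is a pole. It then asserts that this replaces \eqref{pole} at the one place \eqref{pole} enters the proof of Theorem~\ref{main}: ensuring that isometries of the line-free factor of a limit space fix the base point, which is the hypothesis of Lemma~\ref{split}.

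You instead show that the cone structure forces \eqref{pole} itself for all $r\geq R_0(M,\delta)$. You then use two facts. First, $\theta_{p,k}$ is scale invariant, so after rescaling by $R_0^{-1}$, \eqref{non-dec} on $r'>1$ is just \eqref{non-dec} on $r>R_0$. Second, $\delta(n,\Phi)$ in Theorem~\ref{main} does not depend on $M$. Hence Theorem~\ref{main} applies as a black box.

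What each route buys: yours is modular and avoids re-entering the contradiction arguments of Theorem~\ref{main}. There, since $v$ is not uniform along a contradiction sequence, one must also arrange that the relevant limits really are cones, by taking the scales large depending on each $\tilde M_i$. The paper's version is shorter: in the limit the vertex is an exact pole, so no transfer back to $\tilde M$ and no threshold $R_0$ are needed. The cost is that the reader must verify where \eqref{pole} is used.

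One repair in your Step~3. The justification by extending a geodesic past $\tilde y_i$ does not work as written, since the extension need not stay minimizing and $d(\tilde p,\cdot)$ need not increase along it. Your last paragraph is the right fix. The cleanest form is to approximate $(2+\eta)x$, with $2\eta<\delta$, by points $\tilde y_i$. Then $d(\tilde p,\tilde y_i)>2r_i$ for large $i$, and a minimal geodesic from $\tilde p$ to $\tilde y_i$ meets $\partial B_{2r_i}(\tilde p)$ within $(\eta+o(1))r_i$ of $\tilde y_i$. This gives $d(\tilde x_i,\partial B_{2r_i}(\tilde p))\leq(1+2\eta+o(1))r_i$, the desired contradiction. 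If you use $3x$ instead, you need the (true) fact that in a metric cone the only point at distance $2$ from $o$ on a geodesic from $o$ to $3x$ is $2x$.
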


In fact, by \cite{CC}, Euclidean volume growth of $\tilde M$, \eqref{ecul}, implies that any tangent cone at infinity of $(\tilde M, \tilde p)$ is a metric cone and thus polar at the base point. This fact plays the same role as \eqref{pole} in the proof of Theorem~\ref{main} which can be seen in Lemma~\ref{split} (\cite[Lemma 3.1]{H}).  

Theorem~\ref{main} is associated with a longstanding open problem known as Milnor conjecture asks whether an open manifold with nonnegative Ricci curvature has finitely generated fundamental group \cite{Mil}. It has been confirmed in dimensions $2$ and $3$ (\cite{CV, Liu, Pan20a}) and has been proven false for dimensions $\geq 6$ (\cite{BNS23, BNS25}). And the Milnor conjecture has been verified under various additional assumptions, like manifolds with Euclidean volume growth \cite{An, Li} or small diameter growth \cite{Sor}  (see also \cite{Pan19a, Pan19b, H} etc for other conditions and for an survey see \cite{Pan20b}). In \cite{PR}, it was conjectured that Milnor conjecture holds when the universal cover has Euclidean volume growth (see also \cite[Question 1.4]{BNS23}). This conjecture has been proved in dimension 4 by \cite{HH2} and Corollary~\ref{max-vol} is a special case.

\begin{Rem}
\begin{enumerate}
\item[(1)]  In \cite{H}, the main result says that if an open non-negative Ricci curvature manifold $\tilde M$ satisfies $(\delta, \eta, k)$-polar at infinity for some $k\in [0, n]$, where $\delta<\delta(n, \eta)$, then $\pi_1(M)$ is finitely generated and virtually abelian. That $(\delta, \eta, k)$-polar at infinity means that: (i) \eqref{pole} holds; (ii) for $r\geq 1$, $B_r(\tilde p)$ is $(\delta, k)$-Euclidean and not $(\eta, k+1)$-Euclidean. 
\item[(2)] By \cite[Theorem 1.8]{HH}, if (i) and the non-increasing condition i.e. \eqref{non-inc} holds in $\tilde M$ for any $r>1$ and $k=0$, then the same argument in \cite{H} gives that $\pi_1(M)$ is finitely generated and virtually abelian (see also Case 1 in the proof of Theorem~\ref{main}).  And if let
$$k=\min\{l |\,\exists\, r>1, B_r(\tilde p) \text{ is not } (\Phi(\delta), l+1)\text{-Euclidean}\},$$
then after a rescaling we have that: (ii') for $r\geq 1$, $B_r(\tilde p)$ is $(\Phi(\delta), k)$-Euclidean and not $(\delta, k+1)$-Euclidean. 
\item[(3)] Under the condition of Theorem~\ref{main}, if let
$$k=\max\{l |\, \exists\, r>1, B_r(\tilde p) \text{ is } (\delta, l)\text{-Euclidean}\},$$
then after a rescaling we have (ii'). 
\item[(4)] Compared (ii) and (ii'), one can not use the results in \cite{H} to give Theorem~\ref{main} directly. In fact, we will use Theorem~\ref{trans} and a similar but a little more complicated argument as \cite{H} to derive Theorem~\ref{main}. In other words, there may be more examples under the monotone property. 
\end{enumerate}
\end{Rem}

 The author would like to thank Professor Hongzhi Huang's many useful discussion about Theorem~\ref{main}.
\section{Geometric transformation theorem}

In this section, we will give the proofs of Theorem~\ref{trans} and Theorem~\ref{main-2}. The idea is similar as in \cite{CJN, HH} and the main difficulty is a gap  theorem (see Proposition~\ref{tech}) where we only derive a local gap phenomenon compared with \cite[Theorem 3.8]{HH}.  

First, recall the definition of $(\delta, k)$-splitting map which originates from \cite{CC,CC0} and introduced in \cite{ChN} for manifolds and Ricci limit spaces and in \cite{BPS} for $\op{RCD}$-spaces.
\begin{Def}
For a $\op{RCD}(-(N-1)\delta, N)$-space $(X, p, d, m)$, a harmonic map $u=(u^1, \cdots, u^k): B_r(p)\to \Bbb R^k$ is a $(\delta, k)$-splitting map if 
\begin{enumerate}
\item[(1)]  {$|\nabla u^a|\leq 1+\delta$;}
\item[(2)] {$\frac{r^2}{\volume(B_r(p))}\int_{B_r(p)} |\op{Hess}_{u^a}|^2dm =r^2-\kern-1em\int_{B_r(p)}|\op{Hess}_{u^a}|^2 dm<\delta
$;}
\item[(3)] {$-\kern-1em\int_{B_r(p)}|\left<\nabla u^a, \nabla u^b\right>-\delta^{ab}| dm<\delta$} for any $a, b=\{1, \cdots, k\}$.
\end{enumerate}
\end{Def}
The idea of the proof of the following local gap property  comes from  \cite[Theorem 3.8]{HH} under local observations.

\begin{Prop} \label{tech}
 Given $\eta>0$, and $N\geq k, l, k+l\geq 0$, there is $\epsilon(N, \eta)>0$ such that for $\epsilon<\epsilon(N, \eta)$, if a $\op{RCD}(0, N)$-space $(X, p, d, m)$ is $k$-splitting and that $B_r(p)$ is not $(\eta, k+l+1)$-splitting for any $r\geq 1$, then for a $(\epsilon, k+l)$-splitting map $v=(v^1, \cdots, v^{k+l}): B_1(p)\to \Bbb R^{k+l}$ with  $v(p)=0$ and for  $x\in X$,
$$|\nabla v(x)|\leq Cd(x, p)^{C\epsilon}+C,$$ 
$$-\kern-1em\int_{B_1(p)}\left<\nabla v^a, \nabla v^b\right>=\delta^{ab}, \forall\, 1\leq a, b\leq k+l, $$ 
 there are $k$ linear combinations of $v^{1}, \cdots, v^{k+l}$ which are linear with respect to the $\Bbb R$-factors in $X$  and  form a basis of linear functions on $\Bbb R^k$ in $B_{100}(p)$.
 \end{Prop}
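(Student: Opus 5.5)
Write $X=\Bbb R^k\times Y$ with $p=(0,y_0)$, and let $\partial_{1},\dots,\partial_{k}$ be the parallel vector fields of the $\Bbb R^k$-factor. The plan follows the structure of \cite[Theorem 3.8]{HH}, localized.

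\textbf{Step 1: derivatives of $v$ along the Euclidean factor.} For each $1\le j\le k$ and $1\le a\le k+l$, set $w^a_j=\langle\nabla v^a,\partial_j\rangle$. Since $\partial_j$ is parallel (Killing and gradient of a linear coordinate) and $v^a$ is harmonic, $w^a_j$ is harmonic on $B_1(p)$. This uses that in the $\RCD(0,N)$ setting the Laplacian commutes with derivation along the split direction, because translations are measure-preserving isometries.

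The gradient growth bound $|\nabla v|\le Cd(x,p)^{C\epsilon}+C$ only makes sense if $v$ is harmonic on all of $X$. I will assume this is the intended reading, since it is how \cite{HH} uses such maps. Then $w^a_j$ is a harmonic function on $X$ of growth $O(d^{C\epsilon})$, with $C\epsilon<1$. By the Cheng--Yau gradient estimate on $\RCD(0,N)$ spaces, each $w^a_j$ is constant. Hence
\[
v^a(x+te_j)=v^a(x)+c^a_j t ,
\]
so each $v^a$ is affine in the $\Bbb R^k$-directions, with constant coefficient matrix $A=(c^a_j)$ of size $(k+l)\times k$.

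\textbf{Step 2: $A$ has rank $k$ and nearly orthonormal columns.} Suppose instead that some unit vector $e\in\Bbb R^k$ is almost orthogonal to the span of the $\nabla v^a$ on $B_1(p)$. The map $v$, together with the coordinate $\langle x,e\rangle$, would then give a $(\Psi(\epsilon),k+l+1)$-splitting map on $B_1(p)$. By the almost splitting theorem, $B_1(p)$ would be $(\eta,k+l+1)$-splitting, which contradicts the hypothesis.

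This is the step I expect to be the main obstacle. In \cite{HH} the non-splitting is only at large scales, and the contradiction is run by rescaling, which uses the polynomial growth of $v$. Here the hypothesis is "not $(\eta,k+l+1)$-splitting for any $r\ge1$", so scale $1$ itself suffices, with the bounded Hessian on $B_1(p)$ controlling the other directions. To make it quantitative I would argue by contradiction with a sequence $\epsilon_i\to0$. In the limit, $v_i$ converges to a $(k+l)$-splitting of a space that also splits $\Bbb R^k$. If the $\Bbb R^k$-factor were not contained in the span of the limit linear functions, the limit would split $\Bbb R^{k+l+1}$, contradicting non-splitting on $B_1$ (which passes to the limit). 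Therefore the limit matrix $A_\infty$ has $A_\infty^TA_\infty=I_k$, and $A_i$ has rank $k$ for large $i$.

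\textbf{Step 3: the required combinations.} Since $\int_{B_1}\langle\nabla v^a,\nabla v^b\rangle=\delta^{ab}$, choose $B=(A^TA)^{-1}A^T$, a $k\times(k+l)$ matrix, and set $\ell=Bv$. Then $\ell_j(x+te_i)=\ell_j(x)+\delta_{ij}t$, so $\ell_1,\dots,\ell_k$ restricted to the $\Bbb R^k$-factor are exactly the coordinate functions, hence a basis of linear functions on $\Bbb R^k$. By Step 1 this holds globally, in particular on $B_{100}(p)$.
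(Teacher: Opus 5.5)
Your Step 1 matches the paper's first step: the constancy of $\langle\nabla v^a,\nabla x^b\rangle=c^{ab}$, via \cite[Corollary 3.4]{HH}. But Steps 2--3 prove something strictly weaker than the proposition.

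Write $v^a=\sum_b c^{ab}x^b+f^a$, where $f^a$ is harmonic, $f^a(p)=0$ and $\langle\nabla f^a,\nabla x^b\rangle=0$ (these are the paper's $C'_a u^a$). Then $\ell=Bv=x+Bf$. Your identity $\ell_j(x+te_i)=\ell_j(x)+\delta_{ij}t$ only records $BA=I_k$, and on $B_{100}(p)$ the $\ell_j$ still depend on the $Y$-variable through $Bf$.

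The proposition asks for combinations that \emph{are} linear functions of $x^1,\dots,x^k$ on $B_{100}(p)$, i.e.\ whose $Y$-part vanishes identically. This is \eqref{p2.2-1-2}, and it is exactly what Theorem~\ref{main-1} uses: it sets $(v^{a_1},\dots,v^{a_k})=(x_1,\dots,x_k)$ and needs $\langle\nabla v^{a_s},\nabla v^{a_t}\rangle=\delta^{st}$ a.e.\ on $B_4(\tilde p)$. Under your reading, the statement would follow from Step 1 and $\mathrm{rank}\,A=k$ alone.

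The missing idea is the paper's Claim: $f^1,\dots,f^{k+l}$ span at most an $l$-dimensional space on $B_R(p)$. Hence there are at least $k$ independent relations $\sum_a\lambda_af^a\equiv0$, and each corresponding $\sum_a\lambda_av^a$ is exactly linear in $x$. Granting this, your pseudo-inverse does work. Averaging over $B_1(p)$ gives $AA^T+G_f=I$, with $G_f$ the Gram matrix of the $f^a$, and this forces the relation space to coincide with the column space of $A$.

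The Claim genuinely needs the hypothesis at all scales $r\ge1$, contrary to your remark that scale one suffices. Your compactness argument with $\epsilon_i\to0$ at scale one can only show that $Bf$ is small in $W^{1,2}(B_1(p))$, not that it vanishes for a fixed $\epsilon$. For instance, take $l=0$ and $v=(x^1,\dots,x^k)$, and suppose the $Y$-factor carried a nonconstant harmonic function $h$ of linear growth with $h(y_0)=0$. Replacing $x^1$ by a normalization of $x^1+\sigma h$, with $\sigma$ small, preserves every hypothesis on $v$. Yet no combination involving this component is linear in $x$. Such an $h$ is excluded only because it would make the tangent cone at infinity split an extra line.

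Accordingly, the paper argues at infinity:
\begin{enumerate}
\item It takes $l+1$ of the $Y$-parts, which have growth $\le d^{1+\epsilon}+4$, assumes they are independent, and Gram--Schmidt normalizes them on $B_1(p)$.
\item It uses the maximum principle and an iteration to reduce to showing that one of them satisfies $|u^a|\le\frac12 d^{1+\epsilon}$ on $\bar B_R(p)\setminus B_1(p)$.
\item If this failed along $\epsilon_i\to0$, the limits would be $l+1$ nonconstant linear-growth harmonic functions orthogonal to $x^1,\dots,x^k$. Together with the coordinates, these give $k+l+1$ independent ones.
\item Then \cite[Proposition 3.5]{HH} makes the tangent cone at infinity split $\mathbb{R}^{k+l+1}$, contradicting the non-splitting of $B_r(p)$ for all $r\ge1$.
\end{enumerate}
Your Step 2 never uses the growth of $v$ or the large scales, so it cannot produce this exact vanishing.
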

 \begin{proof}
 Assume that $ x^1, \cdots, x^k$ are the standard coordinates in $\Bbb R^k$-factor of $X$ such that $ x^j(p)=0$. Then by \cite[Corollary 3.4]{HH}, for $1\leq a\leq k+l$ and $1\leq b\leq k$, $\left<\nabla v^a, \nabla x^b\right>=c^{ab}$ are constants.

Choose $C_a'$ such that 
$$u^a=\frac1{C'_a}\left(v^a-\sum_{b=1}^kc^{ab}x^b\right)$$
satisfying
\begin{equation}u^a( p)=0, \label{p2.1-1}\end{equation}
\begin{equation}|u^a(x)|\leq d(x, p)^{1+\epsilon}+4, \label{p2.1-2}\end{equation}
\begin{equation}\left<\nabla u^a, \nabla x^b\right>=0.\label{p2.1-3}\end{equation}
 
Claim: Fixed $R>100$, for  $\epsilon$ sufficient small, any $u^{a_1}, \cdots, u^{a_{l+1}}$ are not linear independent in $B_R(p)$, where $a_j\in \{1, \cdots, k+l\}$. 

To show the claim first note that for that $u^{1}, \cdots, u^{l+1}$ are linear independent, without loss of generality, we could assume that
\begin{equation}-\kern-1em\int_{B_1(p)}\left<\nabla u^a, \nabla u^b\right>=0, \forall\, 1\leq a\neq b\leq l+1.  \label{p2.1-4}\end{equation}
In fact, we could do a kind of Schmidt orthogonalization.
 Let
  \begin{eqnarray*}
   D^{ab} &=& -\kern-1em\int_{B_1(p)}\left<\nabla u^a, \nabla u^b\right>\\
   &= &\frac1{C'_aC'_b}\left(-\kern-1em\int_{B_1(p)}\left<\nabla v^a, \nabla v^b\right>-\sum_{d=1}^k c^{ad}c^{bd}\right)\\
   & = &\frac{\delta^{ab}-\sum_{d=1}^k c^{ad}c^{bd}}{C'_aC'_b},
   \end{eqnarray*} 
 where $D^{aa}=1$ for any $a$. 
 
 Let 
 $$\tilde u^1=u^1,$$
 $$\tilde u^2_0= u^2-D^{12}\tilde u^1.$$
 Then
 $$-\kern-1em\int_{B_1(p)}\left<\nabla \tilde u^1, \nabla \tilde u^2_0\right>=0,$$
 $$-\kern-1em\int_{B_1(p)}\left<\nabla \tilde u^2_0, \nabla \tilde u^2_0\right>=1- (D^{12})^2.$$
 Let $$\tilde u^2=\frac{\tilde u^2_0}{\sqrt{1-(D^{12})^2}}.$$
 And let
 $$\tilde u^l_0=u^l-\tilde u^{l-1}-\kern-1em\int_{B_1(p)}\left<\nabla u^l, \nabla \tilde u^{l-1}\right>-\cdots-\tilde u^{1}-\kern-1em\int_{B_1(p)}\left<\nabla u^l, \nabla \tilde u^{1}\right>,$$
 $$\tilde u^l=\frac{\tilde u^l_0}{\left(-\kern-1em\int_{B_1(p)}\left<\nabla \tilde u^l_0, \nabla \tilde u^{l}_0\right>\right)^{1/2}}.$$
 It is obvious that $\tilde u^l$ is a linear combination of $\tilde u^1, \cdots, \tilde u^{l-1}, u^l$ with coefficients which are functions of $D^{ab}$, $a,b=1,\cdots, l$. 
 
Then for any $a, b=1, \cdots, l+1$, we have that
$$-\kern-1em\int_{B_1(p)}\left<\nabla \tilde u^a, \nabla \tilde u^b\right> =\delta^{ab}. $$
And $\tilde u^1, \cdots, \tilde u^{l+1}$ also satisfies \eqref{p2.1-1}-\eqref{p2.1-3}.

Secondly, note that to show the claim we only need to show that if $u^{a_1}, \cdots, u^{a_{l+1}}$ are linear independent, then there is $a_j$  such that for $x\in \bar A_{1, R}(p)=\bar B_R(p)\setminus B_1(p)$, 
\begin{equation}|u^{a_j}(x)|\leq \frac12 d^{1+\epsilon}(p, x). \label{2.2-1-1}\end{equation}
In fact, by \eqref{2.2-1-1} and maximal principle, 
for  $x\in B_R(p)$,
 \begin{equation*}|u^{a_j}(x)|\leq \frac12\left(d(p, x)^{1+\epsilon}+4\right). \end{equation*}
 As in \cite[Theorem 3.8]{HH}, if
\begin{equation} \forall\, x\in \bar A_{1, R}(p), \, \,  |u^{a}(x)|\leq \frac12 d^{1+\epsilon}(p, x), \label{2.1-1-1}\end{equation}
let $\hat u^a=2 u^a$.
If \eqref{2.1-1-1} does not hold, let $\hat u^b=u^b$ and denote the set of these $b$ by $B$.  Then the number of $B$, $|B|\leq l$. 
It is obvious that $\hat u^{a_1}, \cdots, \hat u^{a_{l+1}}$ still satisfiy \eqref{p2.1-1}-\eqref{p2.1-4}.  And for $\{\hat u^{a_1}, \cdots, \hat u^{a_{l+1}}\}$, let $B_1$ be the set of $b$ where \eqref{2.1-1-1} does not hold for $\hat u^b$. Then by above statement, we have that $|B_1|\leq l$ and $B\subset B_1$. Then an iteration argument and the maximal principle shows that there is $a_j$ such that $u^{a_j}=0$ on $B_R(p)$ and thus $u^{a_1}, \cdots, u^{a_{l+1}}$ are not linear independent in $B_R(p)$.

Now we show \eqref{2.2-1-1}. Assume that there are $\epsilon_i\to 0$, a sequence of $\op{RCD}(0, N)$-spaces, $(Y_i, y_i, d_i, m_i)$, and harmonic functions $u_i^1, \cdots, u_i^{l+1}$,  such that for any $a, b=1,\cdots, l+1$, $c=1, \cdots, k$,
\begin{enumerate}
\item[(1)]  $(Y_i, y_i, d_i, m_i)\to (Y, y, d, m)$, a $\op{RCD}(0, N)$-space which is $k$-splitting;
\item[(2)] $B_r(y_i)$ is not $(\eta, k+l+1)$-Euclidean for $r\geq 1$;
\item[(3)] $|u^a_i(x)|\leq d_i(y_i, x)^{1+\epsilon_i}+4$ for $x\in B_R(y_i)$;
\item[(4)] $\left<\nabla u^a_i, \nabla x^c\right>=0$;
\item[(5)] $-\kern-1em\int_{B_1(y_i)}\left<\nabla u_i^a, \nabla u_i^b\right>=0, a\neq b$;
\item[(6)] $u_i^{a}(y_i)=0$, and there is  $x^a_i\in \bar A_{1, R}(y_i)$, $|u^a_i(x^a_i)|\geq \frac12 d_i(y_i, x^a_i)^{1+\epsilon_i}$.
\end{enumerate}

By passing to a subsequence, by \cite[Theorem 4.4]{AH}, $u_i^a$ converges in locally uniform and locally $W^{1,2}$-sense to harmonic functions $u^a$, $a=1, \cdots, l+1$, with
$$|u^a(x)|\leq d(x, y)+4,$$
for $x\in B_{R}(y)$.
Note that $u^a$ is not a constant for that $u^a(y)=0$ and there is $x_i^a\to x^a\in \bar A_{1, R}(p)$ such that $u^a(x^a)\geq \frac12 d(y, x^a)$. For $a,b=1, \cdots, l+1$, $c=1, \cdots, k$, by \cite[Corollary 3.4]{HH}, $\left<\nabla u^a, \nabla x^c\right>$ are constant and thus
 $$\left<\nabla u^a, \nabla x^c\right>=0.$$
 And 
 $$-\kern-1em\int_{B_1(y)}\left<\nabla u^a, \nabla u^b\right>=0, a\neq b.$$
 Thus $u^a\notin \op{span}\{x^1, \cdots, x^k\}$ and $u^1,\cdots, u^{l+1}$ are linear independent.
 Then there are $k+l+1$ linear independent linear harmonic functions in $Y$. And by \cite[Proposition 3.5]{HH}, any tangent cone at infinity of $Y$ splits $k+l+1$ Euclidean factors which is contradict to  the fact that the tangent cone at infinity of $Y$ is not $(\frac{\eta}{2}, k+l+1)$-Euclidean. 
 
By this claim, we know that there are at least $k$ linear combination of $v^1, \cdots, v^{k+l}$ such that they are linear with respect to the $\Bbb R$-factors in $X$ and linear independent. In fact, assume that $\{u^1, \cdots, u^m\}$ is a maximal linearly independent subset, then $m\leq l$. For $a= m+1, \cdots, k+l$, 
$$u^a=\sum_{t=1}^m B^{at} u^t,$$
i.e.,
\begin{equation}v^a-\sum_{t=1}^m\frac{C'_aB^{at}}{C'_t}v^t=\sum_{b=1}^k\left(c^{ab}-\sum_{t=1}^m\frac{C'_aB^{at}}{C'_t}c^{tb}\right)x^b. \label{p2.2-1-2}\end{equation}
  \end{proof}

Compared with \cite[Theorem 4.1]{HH}, to prove Theorem~\ref{trans}, we have the following transformation theorem. 
 \begin{Thm} \label{main-1}
Given $N, \eta>0$, there exist $\epsilon=\epsilon(N, \eta)$, $\delta_0=\delta_0(N, \epsilon,\eta)=\delta_0(N, \eta)$, $\delta(N, \delta_0, \eta)=\delta(N, \eta)$, such that for $0<\delta<\delta(N, \eta)$, for $0\leq r_1 < r_2\leq 2$, if a $\op{RCD}(-(N-1)\delta, N)$-space $(X, p, d, m)$ satisfies that for $s\in (r_1, 2]$,
 $B_s(p)$ is $(\delta, k)$-Euclidean, and for $s\in (r_1, r_2)$, $B_s(p)$ is not $(\delta_0, k+1)$-Euclidean,   for $r_2\leq t\leq 2$, $B_t(p)$ is $(\delta_0, k+l)$-Euclidean but not $(\eta, k+l+1)$-Euclidean,
 then there is  $u: B_2(p)\to \Bbb R^k$ which is a $(\delta_0, k)$-splitting function
and for each $r_1\leq s\leq 2$, there exists a $k\times k$ lower triangle matrix $T_s$ with positive diagonal entries such that
\begin{enumerate}
\item[(1)] $ T_{s} u$ is a $(\epsilon, k)$-splitting map on $B_s(p)$;
\item[(2)] $-\kern-1em\int_{B_s(p)}\left<\nabla (T_{s} u)^a, \nabla(T_{s}u)^b\right>=\delta^{ab}$, for $a, b=1,\cdots, k$;
\item[(3)] $ |T_s\circ T_{2s}^{-1}-1|\leq \epsilon$ and $|\cdot |$ means $L^{\infty}$-norm of a matrix;
\item[(4)] for any $t, s\in [r_1, 1]$, for some $C=C(N)>1$,
$$|T_t\circ T_{s}^{-1}|\leq (1+C\epsilon)\max\left\{(\frac{t}{s})^{C\epsilon}, (\frac{s}{t})^{C\epsilon}\right\}.$$ 
\end{enumerate}
\end{Thm}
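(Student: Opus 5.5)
We follow the scheme of \cite[Theorem 4.1]{HH} and \cite{CJN}: start at the top scale with a splitting map and propagate it downward scale by scale, using Proposition~\ref{tech} as the gap input that keeps the renormalizing matrices almost constant between consecutive dyadic scales.

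\textbf{Choice of constants and initial map.} First fix $\epsilon=\epsilon(N,\eta)$ smaller than the threshold in Proposition~\ref{tech}, with the $\eta$ there replaced by $\eta/2$. Then choose $\delta_0\ll\epsilon$, and finally $\delta\ll\delta_0$. Since $B_2(p)$ is $(\delta,k)$-Euclidean, the standard construction from \cite{CC,ChN,BPS} gives a $(\delta_0,k)$-splitting map $u:B_2(p)\to\Bbb R^k$ with $u(p)=0$. For each $s\in[r_1,2]$ for which $u$ is nondegenerate on $B_s(p)$, let $T_s$ be the Gram--Schmidt matrix that orthonormalizes $\{\nabla u^a\}$ in $L^2$ on $B_s(p)$. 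This $T_s$ is lower triangular with positive diagonal entries, and item (2) holds by construction. So what remains is (1) and (3); item (4) will then follow by iterating (3) along dyadic scales $s,2s,4s,\dots$. This gives $|T_t T_s^{-1}|\le(1+\epsilon)^{\log_2(t/s)+1}$, which is at most $(1+C\epsilon)(t/s)^{C\epsilon}$.

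\textbf{Induction downward.} Let $s_0=\inf\{s\ge r_1:\ \text{(1) and (3) hold for all } s'\in[s,2]\}$ and suppose $s_0>r_1$. By induction, $T_{s}u$ is an $(\epsilon,k)$-splitting map on $B_s(p)$ for $s$ slightly above $s_0$. Moreover, by (4) the gradient growth estimate $|\nabla (T_s u)|\le C(d/s)^{C\epsilon}+C$ holds at larger scales. The standard Hessian/gradient argument of \cite{CJN} shows that $\epsilon$-control at scale $2s$ together with uniform closeness of $T_sT_{2s}^{-1}$ to $1$ gives $(C\epsilon',k)$-control at scale $s$. Hence the only issue is to prove the quantitative bound $|T_sT_{2s}^{-1}-1|\le\epsilon/2$ with an improvement, which we argue by contradiction.

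\textbf{Contradiction argument (main obstacle).} Suppose there are sequences $\delta_i\to0$, spaces $X_i$, and first bad scales $s_i$ at which $|T_{s_i}T_{2s_i}^{-1}-1|>\epsilon/2$. Rescale by $s_i^{-1}$. Since $\delta_i\to0$, the rescaled spaces subconverge to an $\op{RCD}(0,N)$ space $Y$. The limit splits $\Bbb R^k$ because the balls are $(\delta_i,k)$-Euclidean at all scales $\ge s_i$. It is not $(\eta/2,k+l+1)$-Euclidean at scales $\ge1$, which we get as follows. If $s_i/r_2\to0$, at rescaled radius $r_2/s_i$ we use the hypothesis on $[r_2,2]$. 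At intermediate radii in $(1,r_2/s_i)$, the hypothesis ``not $(\delta_0,k+1)$-Euclidean'' gives not $(\eta,k+l+1)$ once $\delta_0\le\eta$. The functions $T_{2s_i}u_i$ converge to harmonic $v$ with the growth bound $|\nabla v|\le Cd^{C\epsilon}+C$ and normalized Gram matrix on $B_1$. One technical point is to append $l$ more splitting coordinates (those from the $(\delta_0,k+l)$-Euclidean structure at scale $r_2$), so that $v$ is a $(\epsilon,k+l)$-splitting map as Proposition~\ref{tech} requires. When $s_i$ stays comparable to $r_2$ we take $l=0$ instead. Proposition~\ref{tech} then gives $k$ combinations of the $v^a$ that are exactly linear in the $\Bbb R^k$ factor on $B_{100}$. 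These are then the $k$ coordinates coming from $u$, so the Gram matrices on $B_1$ and $B_2$ coincide after a lower-triangular change. That yields $T_{s}T_{2s}^{-1}\to1$, which contradicts the assumed bad scale. The main obstacle is this step: checking that the chosen $k$ combinations really come from the $u$-components, so that the lower-triangular structure is preserved, and handling the two regimes ($s_i\ll r_2$ versus $s_i\sim r_2$) uniformly.
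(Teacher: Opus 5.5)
\textbf{Gap on $(r_1,r_2)$.} Your single downward induction with uniform $\epsilon$-control breaks on the range $(r_1,r_2)$. There the only gap is that $B_s(p)$ is not $(\delta_0,k+1)$-Euclidean. Since $\delta_0\le\eta$, this is the \emph{weaker} piece of information: it gives $\theta_{p,k+l+1}(s)\ge\theta_{p,k+1}(s)\ge\delta_0$, not $\ge\eta$. Your step ``not $(\delta_0,k+1)$ gives not $(\eta,k+l+1)$ once $\delta_0\le\eta$'' has the inequality backwards.

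So for first bad scales with $s_i/r_2\to 0$, the limit $Y$ is only known to be $k$-splitting and not $(\delta_0/2,k+1)$-Euclidean at scales $\ge 1$. Proposition~\ref{tech} then applies only with $\eta$ replaced by a fraction of $\delta_0$, and it needs a splitting error below $\epsilon(N,\delta_0/2)$. The $\epsilon$-control you carry down from the top cannot supply this. You also cannot arrange $\epsilon<\epsilon(N,\delta_0/2)$, because $\delta_0$ must be chosen depending on $\epsilon$ for the regime $[r_2,2]$ to work; the requirement is circular. Appending $l$ extra coordinates is not available either, since nothing controls them below $r_2$. Near $s_i\sim r_2$, taking $l=0$ fails too, because there the limit is $(\delta_0,k+l)$-Euclidean at large scales.

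\textbf{The missing reset at $r_2$.} The paper fixes this with a reset at the transition scale.
First, it takes a $(\delta_0,k+l)$-splitting map $\tilde u$ on $B_2(p)$ and applies \cite[Theorem 4.1]{HH} on $[r_2,2]$, using the $\eta$-gap for $k+l+1$. This gives the growth bound $|\nabla v|\le C\tilde d^{C\epsilon}+C$ for $v=r_2^{-1}\tilde T_{p,r_2}\tilde u$ after rescaling by $r_2$.
Second, it runs a compactness argument at scale $r_2$. The limit is $k$-splitting and not $(\eta,k+l+1)$-Euclidean at every scale $\ge 1$, so Proposition~\ref{tech} applies with $\eta$ and $k+l$. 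Via \eqref{p2.2-1-2}, this shows that some $k$ components $u$ of $\tilde u$, after a $k\times k$ lower triangular matrix, form a splitting map on $B_{r_2/10}(p)$ whose error tends to $0$ as $\delta\to 0$. The error is thus pushed below the threshold that \cite[Theorem 4.1]{HH} needs in the $\delta_0$-gap regime with target $\epsilon_1=\frac12\epsilon(N,\delta_0)$; this is why $\delta$ depends on $\delta_0$.
Third, it applies \cite[Theorem 4.1]{HH} again on $[r_1,r_2)$.

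\textbf{Choice of $u$.} The same selection resolves what you call the main obstacle. Your $u$, built arbitrarily from the $(\delta,k)$-Euclidean structure of $B_2(p)$, may use the $l$ almost-Euclidean directions present on $[r_2,2]$ that disappear below $r_2$ (compare the proof of Theorem~\ref{main-2}). In the paper, $u$ is \emph{defined} as the $k$ components of $\tilde u$ picked out at scale $r_2$, rather than chosen first and checked afterwards.
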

 
 \begin{proof}
 For $\eta>0$, take $\epsilon=\frac12\epsilon(N, \eta)$ where $\epsilon(N, \eta)$ is the one in Proposition~\ref{tech}. 
 By \cite[Theorem 4.1]{HH}, there is $\delta_0=\delta_0(N, \epsilon, \eta)$ such that for a $(\delta_0, k+l)$-splitting map, $\tilde u: B_2(p)\to \Bbb R^{k+l}$, for any $t\in [r_2, 1]$, there is a lower triangle matrix $\tilde T_{p, t}$ with positive diagonal entries such that (1)-(4) in Theorem~\ref{main-1} hold for $\tilde T_{p, t}\tilde u$ and $s, t\in [r_2, 1]$. By \cite[Lemma 4.3]{HH}, as in \cite{HH}, for $(\tilde X, \tilde p, \tilde d, \tilde m)=(X, p,\frac{1}{r_2}d, \frac{1}{m(B_{r_2}(p))}m)$ and $v=r_{2}^{-1}\tilde T_{p, r_2}\tilde u$, we have that for any $r\in [1, \frac1{r_2}]$, 
 \begin{eqnarray*}
 -\kern-1em\int_{B_r(\tilde p)} |\nabla v|^2 d\tilde m&=&-\kern-1em\int_{B_{r r_2}(p)}|\nabla \tilde T_{p, r_2}\tilde u|^2d m\\
 &=& -\kern-1em\int_{B_{r r_2}(p)}|\tilde T_{p, r_2}\circ \tilde T^{-1}_{p, r r_2}\nabla \tilde T_{p,rr_2}\tilde u|^2d m\\ 
 &\leq &Cr^{C\epsilon} -\kern-1em\int_{B_{r r_2}(p)}|\nabla \tilde T_{p,  rr_2}\tilde u|^2d m\\ 
 &=& Cr^{C\epsilon}.
  \end{eqnarray*}
  And then by mean value inequality \cite[Lemma 3.6]{MN}, for any $x\in B_{\frac{1}{2r_2}}(\tilde p)$, 
  \begin{equation}
  |\nabla v(x)|\leq C\tilde d(x, \tilde p)^{C\epsilon}+C. \label{alm-linear}
  \end{equation}

Take $\epsilon_1=\frac12\epsilon(N, \delta_0)$ where $\epsilon(N, \delta_0)$ is given by replacing $\eta$ by $\delta_0$ in Proposition~\ref{tech}. 
We will show that there is $\delta(N, \epsilon_1, \delta_0)$, such that for $0<\delta\leq \delta(N, \epsilon_1, \delta_0)$, there is $k\times k$ matrix $T$ and $k$-factors of $\tilde u$, denoted by $u$, such that $T u: B_{r_2/10}(p)\to \Bbb R^k$ is a $(\delta, k)$-splitting map. And then for $s\in [r_1, r_2)$, the existence of $T_s$ holds by \cite[Theorem 4.1]{HH}. 

Argue by contradiction.  By \cite[Lemma 2.12]{HH} and rescaling  suitably, we assume that there is a sequence of $\op{RCD}(-(N-1)\delta_i, N)$ spaces $(X_i, p_i, d_i, m_i)$ with $\delta_i\to 0$ and for $r_{1i}<r_{2i}\leq 2$,
\begin{enumerate}
\item[(1)] $B_r(p_i)$ is $(\delta_{i}, k)$-Euclidean for $r\in [r_{1i}, \delta_{i}^{-1}]$ but not $(\delta_0, k+1)$-Euclidean for every $r\in [r_{1i}, r_{2i})$;
\item[(2)] $B_s(p_i)$ is $(\delta_{0}, k+l)$-Euclidean  but not $(\eta, k+l+1)$-Euclidean for every $s\in [r_{2i}, \delta_i^{-1}]$.
\end{enumerate}
Assume that $\tilde u_i: B_2(p_i)\to \Bbb R^{k+l}$ is a $(\delta_0, k+l)$-splitting map and for any $s\in [r_{2i}, 1]$, there is a lower triangle $(k+l)\times (k+l)$ matrix $\tilde T_{p_i,s}$  with positive diagonal entries and satisfying (3), (4) in Theorem~\ref{main-1}  such that $\tilde T_{p_i, s} \tilde u_i: B_{s}(p_i)\to \Bbb R^{k+l}$ is a $(\epsilon, k+l)$-splitting map with 
$$-\kern-1em\int_{B_{s}(p_i)}\left<\nabla \tilde T_{p_i, s}\tilde u_i^a, \nabla \tilde T_{p_i, s}\tilde u_i^b\right>=\delta^{ab}, \text{ for }a, b=1,\cdots, k+l,$$ 
and there is no $k\times k$ matrix $T_i$ and no $k$-factors of $\tilde u_i$, denoted by $u_i$, such that  $ T_{i} u_i: B_{r_{2i}/10}(p_i)\to \Bbb R^{k}$ is a $(\delta, k)$-splitting map.

For $(\tilde X_i, \tilde p_i, \tilde d_i, \tilde m_i)=(X_i, p_i, \frac1{r_{2i}} d_i, \frac1{m_i(B_{r_{2i}}(p_i))}m_i)$, assume that $(\tilde X_i, \tilde p_i, \tilde d_i, \tilde m_i)\to (\tilde X, \tilde p, \tilde d, \tilde m)$. Then by above assumptions we have that 
$\tilde X$ is $k$-splitting and for any $r\geq 1$, $B_r(\tilde p)$ is not $(\eta, k+l+1)$-Euclidean.  Let $v_i=r_{2i}^{-1}\tilde T_{p_i, r_{2i}}\tilde u_i$, then $v_i: B_1(\tilde p_i)\to \Bbb R^{k+l}$ is a $(\epsilon, k+l)$-splitting map and for $1\leq a, b\leq k+l$,
$$-\kern-1em\int_{B_1(\tilde p_i)}\left<\nabla v_i^a, \nabla v_i^b\right>=\delta^{ab}.$$
And as \eqref{alm-linear},  for any $x\in B_{\frac{1}{2r_{2i}}}(\tilde p_i)$,
 \begin{equation*}
  |\nabla v_i(x)|\leq C\tilde d(x, \tilde p_i)^{C\epsilon}+C. \label{al-linear}
  \end{equation*}
And by passing to a subsequence, by \cite[propersition 2.12]{MN} and \cite[Theorem 4.4]{AH}, $v_i$ converges locally uniformly and locally $W^{1,2}$ to a harmonic map $v=(v^1, \cdots, v^{k+l}): \tilde X\to \Bbb R^{k+l}$ with $v(\tilde p)=0$ and 
$$|\nabla v(x)|\leq C\tilde d(x, \tilde p)^{C\epsilon}+C.$$ 

By Proposition~\ref{tech}, for $\epsilon=\frac12\epsilon(N, \eta)$, there are $v'^{a_1}, \cdots, v'^{a_k}$ which are linear combination of $v^1, \cdots, v^{k+l}$ and are linear with respect to the $\Bbb R$-factors in $\tilde X$  and they form a basis of linear functions on $\Bbb R^k$ in $B_{100}(\tilde p)$ with
$$-\kern-1em\int_{B_1(\tilde p)}\left<\nabla v'^{a_s}, \nabla v'^{a_t}\right>=\delta^{st},  \forall\, 1\leq s, t\leq k.$$
By \eqref{p2.2-1-2}, up to a transformation by a lower triangle matrix with positive diagonal entries, without loss of generality, we may assume $(v'^{a_1}, \cdots, v'^{a_k})=(v^{a_1}, \cdots, v^{a_k})=(x_1, \cdots, x_k)$. 
By the $W^{1,2}_{\op{loc}}$-convergence again, we have 
\begin{eqnarray*}
& & \lim_{i\to \infty} 4-\kern-1em\int_{B_4(\tilde p_i)}|\left<\nabla v_i^a, \nabla v_i^b\right>-\delta^{ab}|\\
&=& \lim_{i\to \infty} -\kern-1em\int_{B_4(\tilde p_i)}||\nabla (v_i^a+v^b_i)|^2-|\nabla (v_i^a-v^b_i)|^2-4\delta^{ab}|\\
&=&  -\kern-1em\int_{B_4(\tilde p)}||\nabla (v^a+v^b)|^2-|\nabla (v^a-v^b)|^2-4\delta^{ab}|=0,
\end{eqnarray*}
which implies that $(v_i^{a_1}, \cdots, v_i^{a_k})$ is a $(\epsilon_i, k)$-splitting map for $B_r(\tilde p_i)$, $r\in [1/20, 1]$ with $\epsilon_i\to 0$. This is a contradict to our assumption. 
\end{proof}

Now we use Theorem~\ref{main-1} to give Theorem~\ref{trans}. In the following, we use $\Psi(\delta)$ to denote functions that $\Psi(\delta)\to 0$ as $\delta\to 0$ and $\Psi$ may not the same in different lines. 

\begin{proof}[Proof of Theorem~\ref{trans}]
For $k=n$, by \cite[Corollary 4.5]{HH}, there is $\tilde \delta(n, \epsilon)$ such that for $\delta<\tilde \delta(n, \epsilon)$, the result hold.

For $k=n-1$, for $\tau_n<\tilde \delta(n, \epsilon)$ as above, let
$$s^n=\sup\{s\in [r_0,2] | B_r(p) \text{ is not } (\tau_{n}, n)\text{-Euclidean, for } r\in [r_0, s)\}.$$
If $s^n=r_0$, then Theorem~\ref{trans} holds for $\delta<\tau_n$ and a $(\Phi(\tau_n), n-1)$-splitting map. If $r_0<s^n<2$,
then by Theorem~\ref{main-1}, for $\Phi(\tau_n)<\min\{\tilde \delta(n, \epsilon), \delta_0(n, \epsilon, 1)\}$, if $\delta<\delta(n, \tau_n, 1)$, Theorem~\ref{trans} holds for $k=n-1$ and a $(\Phi(\tau_n), n-1)$-splitting map.  If $r_0<s^n=2$, then for $\delta<\delta_0(n, \epsilon, \tau_n)=\delta_0$, by \cite[Corollary 4.5]{HH}, Theorem~\ref{trans} holds for $k=n-1$ and a $(\delta_0, n-1)$-splitting map.

For $k = n - 2$, for $\tau_n<\min\{\tilde \delta(n, \epsilon), \delta_0(n, \epsilon, 1)\}$ and $\tau_{n-1}<\min\{\delta(n, \tau_n, 1), \delta_0(n, \epsilon, \tau_n)\}$ as above, let
$$s^{n-1}=\sup\{s\in [r_0,2] | B_r(p) \text{ is not } (\tau_{n-1}, n-1)\text{-Euclidean, for } r\in [r_0, s)\}.$$

If $s^{n-1}=r_0$ then it's the $n-1$ case. If $r_0<s^{n-1}=s^{n}<2$, by Theorem~\ref{main-1}, for $\delta<\delta(n, \tau_{n-1}, 1)$, we have that Theorem~\ref{trans} holds for a $(\Phi(\tau_n), n-2)$-splitting map.  If $r_0<s^{n-1}=s^{n}=2$, then for $\delta<\delta_0(n, \epsilon, \tau_{n-1})$, by \cite[Corollary 4.5]{HH}, Theorem~\ref{trans} holds for $k=n-2$ and a $(\delta_0(n, \epsilon, \tau_{n-1}), n-1)$-splitting map. 
If $r_0<s^{n-1}< s^n<2$, then for $\delta<\delta(n, \tau_{n-1}, \tau_n)$, by Theorem~\ref{main-1}, Theorem~\ref{trans} holds for a $(\Phi(\tau_n), n-2)$-splitting map.  If $r_0<s^{n-1}< s^n=2$, then for $\delta<\delta(n, \tau_{n-1}, \tau_n)$, by Theorem~\ref{main-1}, Theorem~\ref{trans} holds for a $(\Phi(\tau_{n-1}), n-2)$-splitting map.

Assume Theorem~\ref{trans} holds for $n, n-1, \cdots, k+1$

For $k$, take $\tau_{k+1}<\min\{\delta_0(n, \epsilon, \tau_{k+2}), \delta(n, \tau_{k+2}, \tau_{l}), l=k+3, \cdots, n+1\}$,  where $\tau_{n+1}=1$,
let 
$$s^{k+1}=\sup\{s\in [r_0,2] | B_r(p) \text{ is not } (\tau_{k+1}, k+1)\text{-Euclidean, for } r\in [r_0, s)\}.$$

If $r_0=s^{k+1}$, then by induction assumption, Theorem~\ref{trans} holds. 
If $r_0<s^{k+1}=s^{k+2}=\cdots = s^{k+l}<s^{k+l+1}\leq 2$ where $s^{n+1}=2$, then for $\delta<\delta_0(n, \tau_{k+1}, \tau_{k+l+1})$, by Theorem~\ref{main}, Theorem~\ref{trans} holds for a almost splitting map as for $k+l$ or $k+l+1$. If $r_0<s^{k+1}=s^{k+2}=\cdots = s^{n}=2$, then for $\delta_0(n, \epsilon, \tau_{k+1})$, by \cite[Corollary 4.5]{HH}, Theorem~\ref{trans} holds for a $(\delta_0(n, \epsilon, \tau_{k+1}), k+1)$-splitting map. \end{proof}

In the end of this section, we will construct  a sequence of  manifolds with almost non-negative Ricci curvature where the monotone of the number of almost Euclidean factors of balls $B_r(p)$ does not hold and  the transformation theorem does not hold, either.

\begin{proof}[Proof of Theorem~\ref{main-2}]
Given $\eta>0$, for any $\delta>0$, there are $R>0$ and $r_{\delta}>0$ such that for $(r_{\delta}^{-1}\Bbb H^m, x)$, 
$$d_{GH}(B_s(x), B_s(0))\leq s\delta,\forall\, s\in [0, 2]$$
and 
$$d_{GH}(B_s(x), B_s(0))\geq  \eta s\text{ for } s\geq R,$$
where $B_s(0)\subset \Bbb R^m$.
 
By \cite[Theorem 1.4]{CN}, for $n\geq 3$,  there is an $n$-manifold $M$ with nonnegative Ricci curvature, maximal volume growth such that  $M$ splits no $\Bbb R$-factor and for any $1\leq k\leq n-2$ there is $R_i\to \infty$ such that $(R_i^{-1}M, p)\to (\Bbb R^k\times C(Y), (0, y^*))$.

Take $R_{\delta}>0$ large such that for $(M_{\delta}, p_{\delta})=(R^{-1}_{\delta} M, p)$, 
$$d_{GH}(B_s(p_{\delta}), B_s((0, y^*)))\leq \delta s, \forall\, s\in [1,2R],$$
where $B_s((0, y^*))\subset \Bbb R^k\times C(Y)$ for some $k\in [1, n-2]$. For $\eta>0$, by the choice of $M_{\delta}$, we know that there is $r>\tau>0$ such that for $r\geq s\geq \tau$, 
$B_s(p_{\delta})$ is not $(\eta, 1)$-Euclidean. 

Let $(\tilde M, \tilde x)=(M_{\delta}\times r_{\delta}^{-1}\Bbb H^m, (p_{\delta}, x))$ and let $l=\min\{k, m\}\geq 1$.
Then for $r\in (0, 2R]$, $B_r(\tilde x)$ is $(\delta, l)$-Euclidean. For $\epsilon(\eta)<\eta$ satisfying \cite[Theorem 3.8]{HH}, there is no $(\epsilon(\eta), l)$-splitting map $u: B_{2R}(\tilde x)\to \Bbb R^l$ such that for any $r\in [0, R]$, there is a lower triangle matrix $T_r$ with positive diagonal entries such that $T_r u: B_r(\tilde x)\to \Bbb R^l$ is a $(\Psi(\delta), l)$-splitting map. In fact, for any $(\epsilon(\eta), l)$-splitting map $u: B_{2R}(\tilde x)\to \Bbb R^l$, we have that $u$ is closed to a function that are constant restricted to the second factor of  $(x, y)\in \tilde M=M_{\delta}\times r_{\delta}^{-1}\Bbb H^m$.  On the other hand, for any $(\Psi(\delta), k+l)$-splitting map $\tilde u: B_1(\tilde x)\to \Bbb R^{k+l}$, there are $l$ factors of $\tilde u$, $v=(\tilde u^{a_1}, \cdots, \tilde u^{a_l})$ such that for any $s\in (0, 1)$, there is $T_s$ such that $T_s v$ is $(\Psi(\delta), l)$-splitting map where $T_sv$ is closed to a function that are constant restricted to the first factor. And thus there is no $T_s$ such that $T_s u$ is a $(\Psi(\delta), l)$-splitting map.
\end{proof}

\section{Fundamental groups of open manifolds}

In this section, we use the transformation theorem Theorem~\ref{trans} to give the proof of Theorem~\ref{main}. 

\subsection{Finite generation of fundamental groups} 

To give the finitely generated property in Theorem~\ref{main}, recall a basic fact from Pan \cite{Pan20a} (cf. \cite[Lemma 2.2]{H}).
\begin{Lem}\cite{Pan20a, H} \label{pan}
For a sequence of open manifolds $(M_i, p_i)$ with nonnegative Ricci curvature and universal cover spaces $(\tilde M_i, \tilde p_i)\to (M_i, p_i)$, if $\Gamma_i=\pi_1(M_i, p_i)$ is not finitely generated, then for $\{\gamma_{i,1}, \gamma_{i, 2}, \cdots\}$ which is a Gromov's short basis of $\Gamma_i$ and  $r_{i, j}=d(\gamma_{i, j}\tilde p_i,\tilde p_i)$, it holds that for $r_{i, j_i}$, if $(r_{i, j_i}^{-1}\tilde M_i, \tilde p_i, \Gamma_i)$ is equivariant Gromov-Hausdorff convergent to $(\tilde X, \tilde x, G)$, then the orbit $G(\tilde x)$ is not connected.
\end{Lem}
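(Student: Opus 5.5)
This lemma is cited from \cite{Pan20a} (cf. \cite[Lemma 2.2]{H}), so I would reproduce Pan's argument. The proof is by contradiction, using the defining minimality of a Gromov short basis together with equivariant Gromov--Hausdorff convergence.

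First I recall the short basis. Take $\gamma_{i,1}$ to minimize $d(\gamma\tilde p_i,\tilde p_i)$ over $\Gamma_i\setminus\{e\}$. Inductively, take $\gamma_{i,j}$ to minimize it over $\Gamma_i\setminus\langle\gamma_{i,1},\dots,\gamma_{i,j-1}\rangle$. Since $\Gamma_i$ is not finitely generated, this gives infinitely many generators with $r_{i,j}$ nondecreasing and $r_{i,j}\to\infty$ as $j\to\infty$. The key property is the following. Set $H_{i,j}=\langle\gamma_{i,1},\dots,\gamma_{i,j-1}\rangle$. Then every element $\gamma$ with $d(\gamma\tilde p_i,\tilde p_i)<r_{i,j}$ lies in $H_{i,j}$. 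Moreover $\gamma_{i,j}\notin H_{i,j}$ and $d(\gamma_{i,j}\tilde p_i,\tilde p_i)=r_{i,j}$.

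Now rescale by $r_{i,j_i}^{-1}$ and pass to the limit $(\tilde X,\tilde x,G)$; suppose for contradiction that $G(\tilde x)$ is connected. In the rescaled metric, $\gamma_{i,j_i}$ moves $\tilde p_i$ by exactly $1$. After a subsequence it converges to some $g\in G$ with $d(g\tilde x,\tilde x)=1$. The orbit $G(\tilde x)$ is connected and contains both $\tilde x$ and $g\tilde x$. Hence for a small $\delta>0$ there is a $\delta$-chain $\tilde x=y_0,y_1,\dots,y_N=g\tilde x$ in $G(\tilde x)$ with $d(y_m,y_{m+1})<\delta$. Write $y_m=g_m\tilde x$ and approximate each $g_m$ by some $\gamma^{(m)}_i\in\Gamma_i$. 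Then the consecutive quotients $(\gamma^{(m)}_i)^{-1}\gamma^{(m+1)}_i$ move $\tilde p_i$ by less than $2\delta<1$ in rescaled distance, so they lie in $H_{i,j_i}$. Taking $\gamma^{(N)}_i$ to be the element $g$ approximates, we conclude that $\gamma^{(N)}_i\in H_{i,j_i}$ and that $\gamma^{(N)}_i$ is close to $\gamma_{i,j_i}$.

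The main obstacle is upgrading ``close to $\gamma_{i,j_i}$'' to an actual contradiction. The quotient $(\gamma_{i,j_i})^{-1}\gamma^{(N)}_i$ moves $\tilde p_i$ by a small amount $<2\delta$, hence lies in $H_{i,j_i}$. Combined with $\gamma^{(N)}_i\in H_{i,j_i}$, this gives $\gamma_{i,j_i}\in H_{i,j_i}$, which contradicts the construction of the short basis. Some care is needed to make the approximations uniform for large $i$: since $N$ is fixed once $\delta$ is chosen, finitely many approximations suffice, and equivariant convergence gives each of them for $i$ large. Note that the argument uses only that $r_{i,j_i}$ is a short-basis length. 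Nonnegative Ricci curvature enters only to guarantee precompactness, via Gromov, so that the equivariant limit $(\tilde X,\tilde x,G)$ exists.
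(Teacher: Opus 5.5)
Your proof is correct. The paper does not prove this lemma itself; it is quoted from Pan (cf.\ Huang's Lemma 2.2). The mechanism behind the cited result is the one you use. By minimality of the short basis, every element of displacement $<r_{i,j_i}$ lies in $H_{i,j_i}=\langle\gamma_{i,1},\dots,\gamma_{i,j_i-1}\rangle$, while $\gamma_{i,j_i}\notin H_{i,j_i}$. So a connected limit orbit would express $\gamma_{i,j_i}$ as a product of short elements, which is impossible.

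The difference is only in where connectivity is exploited. The usual write-up works in the limit. One passes $H_{i,j_i}$ to a closed subgroup $H\le G$ and observes that $H$ contains every element of $G$ moving $\tilde x$ by less than $1$. Hence $H(\tilde x)$ is open and closed in $G(\tilde x)$. On the other hand, the limit $g$ of $\gamma_{i,j_i}$ satisfies $d(g\tilde x,H(\tilde x))\ge 1$, so $G(\tilde x)$ is disconnected. Alternatively, one argues with the identity component of the Lie group $G$, which is generated by any neighborhood of $e$. Your $\delta$-chain instead pulls a finite chain back to $\Gamma_i$ and reaches the contradiction at a finite stage $i$. It therefore needs neither limits of subgroups nor the Lie structure of $G$, only the definition of equivariant convergence together with the fact that $N$ is fixed once $\delta$ is.

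Three small points. You should say explicitly that $\gamma^{(0)}_i\in H_{i,j_i}$; simply take $\gamma^{(0)}_i=e$. You may take $\gamma^{(N)}_i=\gamma_{i,j_i}$ from the start, which makes your final paragraph unnecessary. Finally, the normalization $d(g\tilde x,\tilde x)=1$ is never actually used.
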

 A very important observation in \cite{H} says:
\begin{Lem}\cite{H} \label{split}
For a metric product space $(\Bbb R^k\times X, (0, x))$ where $X$ contains no line and $G$ is a nilpotent closed subgroup of isometries of $\Bbb R^k\times X$ and $\op{Isom}(X)$ fixes $x$, if there is a tangent cone at $\bar x$ of the quotient space $((\Bbb R^k\times X)/ G, \bar x)$ contains no line, then the orbit $G(0, x)=\Bbb R^s$ for some $s\in [0, k]$.
\end{Lem}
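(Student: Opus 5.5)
The plan is to reduce everything to the $\Bbb R^k$-factor and then use the structure of nilpotent groups of Euclidean motions together with the no-line hypothesis on the tangent cone of the quotient.

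\textbf{Step 1: the isometries split.} Since $X$ contains no line, every line of $\Bbb R^k\times X$ lies in an $\Bbb R^k$-slice. Hence an isometry maps $\Bbb R^k$-slices to $\Bbb R^k$-slices, and by the standard splitting argument $\op{Isom}(\Bbb R^k\times X)=\op{Isom}(\Bbb R^k)\times \op{Isom}(X)$. So each $g\in G$ is $g=(A_g,h_g)$. Since $\op{Isom}(X)$ fixes $x$, we have $h_g(x)=x$, and therefore
$$G(0,x)=G_1(0)\times\{x\},$$
where $G_1$ is the projection of $G$ to $\op{Isom}(\Bbb R^k)$. The group $G_1$ is nilpotent. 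The orbit $G(0,x)$ is closed because $G$ is closed and acts properly, so $G_1(0)$ is closed in $\Bbb R^k$. It remains to show that $G_1(0)$ is an affine subspace through $0$, i.e. $\Bbb R^s$ with $s\le k$.

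\textbf{Step 2: structure of the nilpotent orbit.} For a nilpotent group of Euclidean motions I would use the following structure fact (a Bieberbach/Wilking-type result). After passing to the closure of $G_1$, its identity component $G_1^0$ has orbit through $0$ equal to an affine subspace $V\cong\Bbb R^{s}$. On $V$ the group acts by translations up to a compact part fixing $0$; nilpotency is what forces the rotational parts to commute with translations on $V$. The full orbit is then $G_1(0)=\bigcup_j (V+w_j)$, where $\{w_j\}\subset V^\perp$ is a discrete set forming an orbit of the discrete group $\Lambda=G_1/G_1^0$. Thus the claim reduces to showing that this discrete part is trivial, i.e. $\Lambda(0)\subset V$.

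\textbf{Step 3: the no-line condition kills the discrete part.} Let $K$ be the isotropy group of $(0,x)$. By the slice theorem, a neighborhood of $\bar x$ in the quotient is isometric to a neighborhood in $(V^\perp\times X)/K$, where $V^\perp$ is the normal space inside $\Bbb R^k$. The tangent cone at $\bar x$ is therefore $(V^\perp\times C_xX)/K$ for some tangent cone $C_xX$. If the orbit had a piece $V+w_j$ with $w_j\neq 0$, I would argue as follows. Nilpotency of $G_1$ lets us choose an element $\gamma\in G_1$ whose translational part $w$ lies in $V^\perp$, is nonzero, and is fixed by the linear parts of the elements of $K$ (take $\gamma$ in the center modulo $G_1^0$ and average). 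Then the direction $w$ is $K$-invariant in $V^\perp$. The line $\Bbb R w\times\{\text{vertex}\}$ in $V^\perp\times C_xX$ therefore projects isometrically to a line in the tangent cone of the quotient, contradicting the hypothesis. Hence the orbit is exactly $V\cong\Bbb R^s$. Intuitively, the discrete part $\Lambda$ acts on $V^\perp$ with closed orbits, so locally $\bar x$ looks like a point of a flat orbifold of positive dimension in a $K$-fixed direction, and its tangent cone contains the corresponding line, just as for $(\Bbb R\times X)/\Bbb Z$.

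\textbf{Main obstacle.} The hard step is producing the nonzero $K$-invariant normal direction in Step 3, i.e. using nilpotency to ensure that the rotational parts cannot destroy every line in $V^\perp/K$. I would first try induction on the nilpotency class via the center of $G_1$. A central element with nontrivial translational part normal to $V$ commutes with $K$, so its translational vector is $K$-fixed. If every central element has its translational part in $V$, I would pass to the quotient by the center acting on $V^\perp$ and repeat the argument there.
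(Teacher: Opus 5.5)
The paper gives no proof of this lemma; it only quotes it from \cite{H}. So I am judging your argument on its own. Step 1 is fine. Step 2, however, is a genuine gap, because the ``structure fact'' you invoke is false as stated. The identity component of a closed abelian group of Euclidean motions need not have a flat orbit through $0$. For $k=2$, $X$ a point and $G=SO(2)$ rotating about $(1,0)$, the orbit $G(0)$ is a circle. For screw motions about an axis missing $0$ it is a helix. The decomposition $G_1(0)=\bigcup_j(V+w_j)$ is also unjustified: for screw motions about an axis through $0$, $V$ is the axis, but the $G_1^0$-orbits of other points are helices, not translates of $V$. In these examples the lemma's hypothesis fails. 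That is exactly the point: the no-line condition must already be used to control $G_1^0$, not only the discrete part.

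What is missing is roughly the following.
\begin{itemize}
\item[(a)] A connected nilpotent subgroup of $\mathrm{Isom}(\mathbb R^k)$ is abelian. Its linear parts form a nilpotent, hence abelian, subalgebra of $\mathfrak{so}(k)$. Its brackets are pure translations, which lie both in the common kernel $E_0$ of the linear parts and in $E_0^\perp$. After moving the base point to a suitable $p^*$, every Killing field in its Lie algebra has the form $v\mapsto\omega(v-p^*)+b$ with $b\in E_0$. Hence $F=p^*+E_0$, the common minimum set of the lengths of these fields, is an affine subspace on which $G_1^0$ acts by translations. Since $G_1^0$ is normal, $F$ is invariant under all of $G_1$.
\item[(b)] If $0\notin F$, the vector from $0$ to its foot point on $F$ is nonzero and fixed by the isotropy $K$. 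It is orthogonal to $T_0G_1(0)$, because $d(\cdot,F)$ is constant on the orbit. It therefore produces a line in every tangent cone at $\bar x$.
\end{itemize}
Hence $0\in F$, and $G_1^0(0)=B_0$ is a linear subspace, equal to $T_0G_1(0)$.

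Step 3 is also incomplete, as you acknowledge. ``Take $\gamma$ central modulo $G_1^0$ and average'' does not produce the needed vector. Averaging over $K$ projects onto the $K$-fixed vectors and may give $0$. The averaged vector is not the displacement of any group element. And central elements need not move $\bar 0$ at all. The efficient tool is the normalizer condition for nilpotent groups:
\begin{itemize}
\item[(1)] After (a)--(b), $\Lambda=G_1/G_1^0$ acts isometrically on $U=F/B_0$, and the stabilizer of $\bar 0$ is $\bar K=G_1^0K/G_1^0$.
\item[(2)] The no-line hypothesis says exactly that $\bar K$ fixes no nonzero vector of $U\subset(T_0G_1(0))^\perp$, i.e.\ $\mathrm{Fix}_U(\bar K)=\{\bar 0\}$.
\item[(3)] $N_\Lambda(\bar K)$ preserves $\mathrm{Fix}_U(\bar K)$, so it lies in the stabilizer, i.e.\ equals $\bar K$.
\item[(4)] In a nilpotent group this forces $\Lambda=\bar K$, i.e.\ $G(0,x)=B_0\times\{x\}$.
\end{itemize}
Your central-series induction can also be made to work. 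If $Z(\Lambda)\subset\bar K$, then $Z(\Lambda)$ fixes the affine span of $\Lambda(\bar 0)$ pointwise, and you pass to $\Lambda/Z(\Lambda)$ acting there. But this only works on $U$, after (a)--(b).

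Finally, the quotient is not locally isometric to $(V^\perp\times X)/K$; screw motions already give a non-flat quotient metric. Only the tangent-cone statement is true. It should be justified by showing that the blow-up limit group acts on $\mathbb R^k$ by maps $v\mapsto Lv+b$, with $L$ in the linear isotropy and $b\in T_0G_1(0)$. That is all you need to turn a $K$-fixed normal vector into a line.
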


Corresponding to the second observation \cite[Lemma 3.7]{HH}, we have the following:
\begin{Lem} \label{split-2}
Given $n, \epsilon>0$, integers $k$, $l$, and a function $\Phi: \Bbb R^+\to \Bbb R^+$ with $\lim_{\delta'\to 0}\Phi(\delta')=0$, there is $\delta(n, \epsilon, \Phi)>0$ such that for $0<\delta<\delta(n, \epsilon, \Phi)$, $0<r_1<r_2$, for a complete $n$-manifold with $\op{Ric}_M\geq 0$,  if for some $\tilde p\in \tilde M$, the universal cover of $M$,
\eqref{non-dec} holds for $r\in [r_1, r_2]$ and $\tilde p$, $B_{r_2}(\tilde p)$ is $(\delta, k+l)$-Euclidean and $B_{r_1}(\tilde p)$ is $(\delta, k)$-Euclidean,  then that $B_{r_2}(p)$ is $(\delta, k')$-Euclidean for some $l\leq k'\leq k+l$ implies that  $B_{r_1}(p)$ is $(\epsilon, k'-l)$-Euclidean. 
\end{Lem}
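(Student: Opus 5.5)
The plan is to argue by contradiction with the transformation theorem, Theorem~\ref{trans}, as the main tool, working with $\Gamma=\pi_1(M)$-invariant harmonic functions on $\tilde M$. These are exactly the harmonic functions that descend to $M$. After rescaling, assume $r_2=1$. Suppose there are $\delta_i\to 0$ and manifolds $M_i$ satisfying the hypotheses, with $B_1(p_i)$ $(\delta_i,k')$-Euclidean but $B_{r_{1,i}}(p_i)$ not $(\epsilon,k'-l)$-Euclidean.

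\emph{Step 1: lift the splitting of the quotient.} Since $B_1(p_i)\subset M_i$ is $(\delta_i,k')$-Euclidean, Cheeger--Colding gives a $(\Psi(\delta_i),k')$-splitting map $w_i:B_1(p_i)\to\Bbb R^{k'}$. Its lift $\tilde w_i=w_i\circ\pi$ is harmonic and $\Gamma_i$-invariant on $\pi^{-1}(B_1(p_i))\supset B_1(\tilde p_i)$, with $|\nabla\tilde w_i|\le 1+\Psi$.

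The next claim is that $\tilde w_i$ is $\Psi$-close to linear combinations of the components of a $(\Psi(\delta_i),k+l)$-splitting map $\tilde u_i$ of $B_1(\tilde p_i)$. I would show this in the equivariant GH limit $(B_1(\tilde p_i),\Gamma_i)\to(\Bbb R^{k+l}\times Z,G)$. There $\tilde w_i$ converges to a $G$-invariant harmonic function with $|\nabla|\le1$. Its gradient is parallel: use the Bochner formula, the Hessian smallness transferred through the local isometry $\pi$, and the fact that the limit quotient $(\Bbb R^{k+l}\times Z)/G$ splits $\Bbb R^{k'}$. Hence, via \cite[Corollary 3.4]{HH}, it is linear in the $\Bbb R^{k+l}$-factor. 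So we get a $k'$-dimensional subspace $V_i$ of $\operatorname{span}\{\tilde u_i^1,\dots,\tilde u_i^{k+l}\}$ consisting, up to $\Psi(\delta_i)$, of $\Gamma_i$-invariant functions.

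\emph{Step 2: transport down to $r_1$.} By \eqref{non-dec} on $[r_1,1]$ and Theorem~\ref{trans}, together with Theorem~\ref{main-1} and Proposition~\ref{tech}, only $k$ directions survive to the smaller scales. Concretely, there is a $k$-dimensional subspace $U_i\subset\operatorname{span}\{\tilde u_i^a\}$ and lower triangular matrices $T_s$ such that $T_s$ applied to a basis of $U_i$ is an $(\epsilon',k)$-splitting map on $B_s(\tilde p_i)$ for all $s\in[r_1,1]$. Since $\dim V_i+\dim U_i-(k+l)=k'-l$, the intersection $W_i=V_i\cap U_i$ has dimension at least $k'-l$. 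Gram--Schmidt inside $W_i$, which is lower triangular and compatible with $T_s$, produces a map $f_i:B_{r_1}(\tilde p_i)\to\Bbb R^{k'-l}$ with two properties. It is an $(\epsilon',k'-l)$-splitting map, and it is $\Gamma_i$-invariant, because linear combinations of invariant functions are invariant.

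\emph{Step 3: descend and conclude.} $f_i$ descends to a harmonic $\bar f_i:B_{r_1}(p_i)\to\Bbb R^{k'-l}$. Its pointwise gradient bound and its orthonormality and Hessian integrals pass to the quotient ball via the covering $\pi$ and relative volume comparison. So $\bar f_i$ is an $(\epsilon'',k'-l)$-splitting map, and by the standard splitting-map-to-GH implication $B_{r_1}(p_i)$ is $(\epsilon,k'-l)$-Euclidean, which is the contradiction.

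\emph{Main obstacle.} I expect the hardest parts to be the quantitative bookkeeping in Step 1 and in the descent of Step 3. First, the exact invariance of $\tilde w_i$ is not directly an element of $\operatorname{span}\{\tilde u_i^a\}$. I would therefore replace $V_i$ by the subspace closest to $\tilde w_i$, and check that the $\Psi(\delta_i)$ error does not blow up under the matrices $T_s$. The bound $|T_t T_s^{-1}|\le(1+C\epsilon)(s/t)^{C\epsilon}$ keeps this polynomially controlled provided $r_1$ is fixed in the contradiction sequence. If $r_{1,i}\to0$ is allowed, I would instead take limits at scale $r_{1,i}$ and use that the limit $G$-invariant linear functions stay linear.

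Second, the integral conditions in Step 3 involve the measure of the quotient ball, whose relation to the measure of the cover ball is not uniform. I would handle this by lifting to fundamental domains, or by passing to the equivariant limit at scale $r_1$ and using $G$-invariance there.
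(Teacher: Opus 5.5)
Your architecture is the paper's. You lift the splitting map of $B_{r_2}(p)$ to the cover and place it inside a $(k+l)$-splitting map of $B_{r_2}(\tilde p)$. The transformation theorem (Theorem~\ref{trans}, via the proof of Theorem~\ref{main-1}) then selects the $k$ directions that survive down to $B_{r_1}(\tilde p)$, you count that at least $k'-l$ survivors are $\pi_1(M)$-invariant, and you push them down. Your count $\dim(V_i\cap U_i)\ge k'-l$ is a cleaner form of the paper's assertion that at least $k'-l$ of the $k$ selected factors come from $\tilde u$.

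\textbf{The gap is in how you produce $V_i$.} You only know its elements are $\Psi(\delta_i)$-close, at scale $r_2=1$, to $\Gamma_i$-invariant functions. Yet Step~2 asserts that $f_i$ is exactly invariant, and neither of your proposed repairs closes this.

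If $r_1/r_2$ is bounded below, the lemma is trivial: restricting a $(\delta,k')$-splitting map to $B_{r_1}(p)$ and using Bishop--Gromov already gives a $(C(n)(r_2/r_1)^n\delta,k')$-splitting map. So the entire content is uniformity in $r_2/r_1$, and the contradiction sequence must allow $r_{1,i}\to 0$.

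In that regime the error is not small relative to the signal. Write $g=f+e$ with $g$ invariant and $f\in W_i$. The mean value inequality for the subharmonic function $|\nabla e|^2$ gives $|\nabla e|\le C\sqrt{\Psi(\delta_i)}$ on $B_{1/2}(\tilde p_i)$. However, the $L^2$-average of $|\nabla f|$ over $B_s(\tilde p_i)$ may decay like $s^{C\epsilon}$, which is exactly what $|T_tT_s^{-1}|\le(1+C\epsilon)(s/t)^{C\epsilon}$ permits. After renormalizing at scale $r_{1,i}$, both the gradient error and the invariance defect $\sup_x|f(\gamma x)-f(x)|$ are of order $\sqrt{\Psi(\delta_i)}\,r_{1,i}^{-C\epsilon}$, which need not tend to $0$. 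Taking limits at scale $r_{1,i}$ therefore yields no invariance.

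\textbf{The missing idea is to make $V_i$ exactly invariant, as the paper does.} Take $\tilde w_i=w_i\circ\pi$, which is a $(\Psi(\delta_i),k')$-splitting map of $B_1(\tilde p_i)$ by the covering lemma \cite[Lemma 5.3]{HH}. Extend it to $\tilde v_i=(\tilde w_i,h_i)$, a $(\Psi(\delta_i),k+l)$-splitting map.

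Your Step~1 limit analysis is precisely what produces $h_i$: choose them from the Euclidean factor of the limit, orthogonal to the limit of $\tilde w_i$. If the limit splits more than $k+l$ lines, a pre-chosen $\tilde u_i$ need not contain the directions of $\tilde w_i$ at all, which is a further reason to build the extension around $\tilde w_i$.

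Then $V_i=\operatorname{span}\{\tilde w_i^1,\dots,\tilde w_i^{k'}\}$, and $W_i=V_i\cap U_i$ consists of genuinely invariant harmonic functions. Nothing has to be carried across scales, and the descent is again \cite[Lemma 5.3]{HH}.

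That lemma is also what your Step~1 Hessian transfer and your Step~3 measure concern actually require. Being a local isometry controls $|\mathrm{Hess}|$ pointwise, but averages over $B_r(\tilde p)$ and over $B_r(p)$ differ by the nonuniform covering multiplicity.
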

\begin{proof}
As the proof of \cite[Lemma 3.7]{H}, we will use the covering lemma \cite[Lemma 5.3]{HH} and transformation theorem Theorem~\ref{trans}.

Let $u=(u^1, \cdots, u^{k'}): B_{r_2}(p)\to \Bbb R^{k'}$ be a $(\delta, k')$-splitting map. Consider the covering $\pi: (\tilde M, \tilde p)\to (M, p)$. Then by \cite[Lemma 5.3]{HH}, $\tilde u= u\circ \pi : B_{r_2}(\tilde p)\to \Bbb R^{k'}$ is a $(C(n)\delta, k')$-splitting map. Extend $\tilde u$ to $\tilde v: B_{r_2}(\tilde p)\to \Bbb R^{k+l}$ which is a $(C(n)\delta, k+l)$-splitting map. Then by Theorem~\ref{trans}, for $\delta$ small, there are $k$ factors of $\tilde v$ which we denoted by $v'=(\tilde v^{a_1}, \cdots, \tilde v^{a_k}): B_{r_2}(\tilde p)\to \Bbb R^k$ such that there is a lower triangle matrix $T$ such that $T v': B_{r_1}(\tilde p)\to \Bbb R^k$ is a $(\Psi(\delta), k)$-splitting map. Note that there are at least $k'-l$ factors of $v'$ come from $\tilde u$ which we denote them by $u'=(u^{b_1}, \cdots, u^{b_{k'-l}})\circ \pi: B_{r_2}(\tilde p)\to \Bbb R^{k'-l}$. Then $\left.T\right|_{u'} u': B_{r_1}(\tilde p)\to \Bbb R^{k'-l}$ is a $(\Psi(\delta), k'-l)$-splitting map.  And by \cite[Lemma 5.3]{HH} again, we have that $\left.T\right|_{u'}(u^{b_1}, \cdots, u^{b_{k'-l}}): B_{r_1}(p)\to \Bbb R^{k'-l}$ is a $(C(n)\Psi(\delta), k'-l)$-splitting map and thus $B_{r_1(p)}$ is $(\epsilon, k'-l)$-Euclidean when $\delta$ small.
\end{proof}

By Lemma~\ref{split} and Lemma~\ref{split-2}, the finitely generated part of Theorem~\ref{main} can be derived by a similar but a little more complicated argument than \cite[Theorem 1.3]{H}.

\begin{proof}[Proof of Theorem~\ref{main} for finitely generated part]
Argue by contradiction. Assume that there are $\delta_i\to 0$ and a sequence of complete $n$-manifolds $(M_i, p_i)$ with $\op{Ric}_{M_i}\geq 0$ and infinitely generated fundamental group $\Gamma_i=\pi_1(M_i, p_i)$ satisfying that for $\tilde p_i\in \tilde M_i$ and $r\geq 1$, 
\begin{equation}
\hat \Theta_{\tilde p_i, k}(r)\leq \Phi(\max\{\delta_i, \theta_{\tilde p_i, k}(r)\}), n\geq k\geq 0,\label{non-dec-1}
\end{equation}
and 
 \begin{equation}\forall\, \tilde x\in \partial B_r(\tilde p_i), d(\tilde x, \partial B_{2r}(\tilde p_i))\leq (1+ \delta_i)r. \label{al-cone}\end{equation}
 By \cite{Wi}, without loss of generality, we may assume that for any $i$, $\Gamma_i$ is abelian.  For each $i$, since $\Gamma_i$ is infinitely generated, there is a Gromov's short basis $\{\gamma_{i,1}, \gamma_{i,2}, \cdots\}$ of $\Gamma_i$ with $r_{i, j}=d(\tilde p_i, \gamma_{i, j}\tilde p_i)\to \infty$ as $j\to \infty$.  For each $i$, take $N$ integers $1\leq j_i^1<j_i^2<\cdots< j_i^N$ such that for $s=1,\cdots, N-1$,
$$1<r_{i, j_i^s}<i^{-1}r_{i, j_i^{s+1}}.$$ 
Assume that for each $s$,
\begin{equation*}\begin{array}[c]{ccc}
(r^{-1}_{i, j_i^{s}}\tilde M_i, \tilde p_i,  \Gamma_i) & \xrightarrow{eqGH} & (\Bbb R^{k_s}\times X_s, (0, x_s), G_s)\\
\downarrow\scriptstyle{ \pi_i}&&\downarrow\scriptstyle{\pi}\\
(r^{-1}_{i, j_i^s}M_i, p_i)&\xrightarrow{GH} &(\Bbb R^{l_s}\times Y_s, (0, y_s)),
\end{array} \end{equation*}
where $X_s$ and $Y_s$ contain no line, $G_s$ is abelian and closed. And it is obvious that $l_s\leq k_s$. Now we study the relation between $k_s$ and $k_{s-1}$ and the relation between $l_s$ and $l_{s-1}$, $s=\{2, \cdots, N\}$. 

Case 1: Assume that $k_{s-1}=k_{s}=k$. Then by \eqref{non-dec-1}, for any $s_i\in [r_{i, j_i^{s-1}}, r_{i, j_i^s}]$, if 
\begin{equation}\begin{array}[c]{ccc}
(s^{-1}_i\tilde M_i, \tilde p_i,  \Gamma_i) & \xrightarrow{eqGH} & (\Bbb R^{k'}\times X', (0, x'), G')\\
\downarrow\scriptstyle{ \pi_i}&&\downarrow\scriptstyle{\pi}\\
(s^{-1}_i M_i, p_i)&\xrightarrow{GH} &(\Bbb R^{l'}\times Y', (0, y')), \label{1.1-1}
\end{array} \end{equation}
where $X'$ and $Y'$ contain no line, then $k'=k$. And then by Lemma~\ref{split-2}, $l_{s-1}\geq l'$. 

By Lemma~\ref{pan} and Lemma~\ref{split}, every tangent cone of $Y_s$ at $y_s$ splits an $\Bbb R$-factor. And thus as the discussion in \cite{H}, by passing to a subsequence, there is $s_i\in [r_{i, j_i^{s-1}}, r_{i, j_i^s}]$ such that \eqref{1.1-1} holds with $l'\geq l_s+1$ and thus $l_{s-1}\geq l_s+1$.

Case 2: Assume that $k_{s-1}=k_s-t$, $t>0$. As the above discussion, there is $s_i\in [r_{i, j_i^{s-1}}, r_{i, j_i^s})$ such that \eqref{1.1-1} holds with $l'\geq l_s+1$. And by Lemma~\ref{split-2}, $l_{s-1}\geq \max\{l'-t, 0\}\geq \max\{0, l_s+1-t\}$. 

Note that Case 2 would appear at most $n$ times when $k_s=0$. And once Case 2 appears at the scale $r_{i, j_i^s}$, Case 1 would then appear at most $\max\{0, k_s-t-(l_s+1-t)\}=\max\{0, k_s-l_s-1\}$ times.  And if Case 2 does not appear then as in \cite{H}, Case 1 could happen at most $n$ times. Thus if we take $N>n^2+1$, there is a contradiction. 
\end{proof}

\subsection{Virtually abelian of fundamental groups}

To show the virtually abelian part of Theorem~\ref{main}, we will use the following two properties.

For an open manifold $(M, p)$ with nonnegative Ricci curvature and a closed isometric group action $G$, let $\Omega(M, p, G)$ be the set of all equivariant tangent cones at infinity of $(M, p, G)$, i.e., if $(Y, y, H)\in \Omega(M, p, G)$, then there is $r_i\to \infty$ such that $(r_i^{-1}M, p, G)$ is equivariant Gromov-Hausdorff convergent to $(Y, y, H)$.

\begin{Lem}(cf.\cite[Lemma 5.1]{H}) \label{con}
The set $\Omega(M, p, G)$ is connected, i.e., for any $(X_i, x_i, K_i)\in \Omega(M, p, G)$, i=1,2, any $\epsilon>0$, there is a sequence of $(Y_j, y_j, H_j)\in \Omega(M, p, G)$, $j=1, \cdots, N$ such that 
\begin{enumerate}
\item[(1)] $d_{GH}((Y_j, y_j, H_j), (Y_{j+1}, y_{j+1}, H_{j+1}))\leq \epsilon$, $j=1, \cdots, N-1$;
\item[(2)] $(Y_1, y_1, K_1)=(X_1, x_1, K_1)$ and  $(Y_N, y_N, K_N)=(X_2, x_2, K_2)$.
\end{enumerate}
\end{Lem}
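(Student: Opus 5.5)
The plan is to derive connectedness of $\Omega(M,p,G)$ from the continuity of the rescaling family $r\mapsto (r^{-1}M,p,G)$ and from precompactness of the space of pointed proper metric spaces with closed isometric group actions under the equivariant Gromov--Hausdorff distance. By Gromov's compactness theorem and Fukaya--Yamaguchi's equivariant version, the family $\{(r^{-1}M,p,G)\}_{r\geq 1}$ is precompact, since $\op{Ric}_M\geq 0$ gives uniform volume doubling. Hence $\Omega(M,p,G)$, the set of subsequential limits as $r\to\infty$, is nonempty and compact. I would fix a metric $d_{eqGH}$ realizing pointed equivariant GH convergence (for instance $\sum_j 2^{-j}\min\{1,d_{eqGH}$ on $B_j\}$). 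Throughout, the limits are identified up to equivariant isometry.

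The first step is to show that $r\mapsto (r^{-1}M,p,G)$ is continuous in $d_{eqGH}$. This holds uniformly on compact ranges of the ratio: the identity map from $r^{-1}M$ to $r'^{-1}M$ restricted to $B_R$ distorts distances by at most $R\,|r/r'-1|$ and commutes with $G$. Hence $d_{eqGH}\big((r^{-1}M,p,G),(r'^{-1}M,p,G)\big)\leq \Psi(|r/r'-1|)$, with $\Psi$ independent of $r$.

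The second step is the standard argument that the $\omega$-limit set of a continuous curve in a compact metric space is connected, made quantitative as in the statement. Fix $(X_1,x_1,K_1)$ and $(X_2,x_2,K_2)$ with sequences $r_i\to\infty$ and $r_i'\to\infty$, and fix $\epsilon>0$. I would choose $i$ large and $a=r_i<b=r_{i'}'$ with both rescalings $\epsilon/3$-close to the respective targets. Then I would subdivide $[a,b]$ geometrically, $t_m=a(1+\tau)^m$, with $\tau$ chosen so that consecutive points are $\epsilon/3$-close by the first step. Each $(t_m^{-1}M,p,G)$ is within $\epsilon/3$ of some element of $\Omega$. The obstacle is that the number of points grows as $i\to\infty$, whereas the statement needs elements of $\Omega$ rather than rescalings at finite scales.

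For the main step, I would argue by contradiction. Suppose no $\epsilon$-chain exists. Let $A$ be the set of $Z\in\Omega$ that can be joined to $(X_1,x_1,K_1)$ by an $\epsilon$-chain in $\Omega$, and let $B=\Omega\setminus A$. Then $d_{eqGH}(A,B)\geq\epsilon$, and both sets are closed, hence compact. Consider the closed $\epsilon/3$-neighbourhoods $U_A$ and $U_B$, which are disjoint. The curve $r\mapsto(r^{-1}M,p,G)$ enters $U_A$ and $U_B$ for arbitrarily large $r$. By continuity, for arbitrarily large $r$ it also passes through the complement of $U_A\cup U_B$; choose such $s_j\to\infty$. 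By precompactness a subsequence converges to an element of $\Omega$ lying at distance at least $\epsilon/3$ from $A\cup B=\Omega$, which is a contradiction. The delicate point is making the modulus of continuity in the first step uniform in $r$ so that the intermediate-value argument holds on $[a,b]$, and ensuring the equivariant GH metric is a genuine metric on equivalence classes so that compactness applies. I would cite Fukaya--Yamaguchi for both, as in \cite[Lemma 5.1]{H}.
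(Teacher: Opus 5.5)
Your proposal is correct. The paper gives no argument of its own for this lemma (it only cites \cite[Lemma 5.1]{H}), and what you wrote is the standard proof of this fact: uniform continuity of $r\mapsto(r^{-1}M,p,G)$ in the ratio $r/r'$, combined with equivariant precompactness, i.e.\ the usual argument that the $\omega$-limit set of a continuous path with precompact image is connected.

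Incidentally, the ``obstacle'' you raise for the direct discrete approach is not real. Precompactness gives $a_0$ such that every $(t^{-1}M,p,G)$ with $t\geq a_0$ lies within $\epsilon/3$ of $\Omega$. Since $N$ is allowed to depend on $\epsilon$, your geometric subdivision of $[a,b]$ with $a\geq a_0$ already produces the required $\epsilon$-chain without the contradiction step.
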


\begin{Thm}\cite[Theorem A, Proposition 4.2]{Pan22} \label{pan2}
Let $M$ be an open $n$-manifold with nonnegative Ricci curvature. There is $\epsilon(n)>0$ such that if there is an integer $t$ such that for any $(\tilde X, \tilde x, G)\in \Omega(\tilde M, \tilde p, \Gamma)$, 
$$d_{GH}((\tilde X, \tilde x, G(\tilde x)), (\Bbb R^t\times \tilde Y, (0, y), \Bbb R^t))\leq \epsilon(n),$$
then $\Gamma_i$ is virtually abelian. 
\end{Thm}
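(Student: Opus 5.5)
The plan follows the strategy of \cite{Pan22}: first reduce to a torsion-free nilpotent group, then show that every equivariant asymptotic cone sees the orbit as an exact Euclidean factor acted on by translations, and finally rule out a nonabelian nilpotent group by a critical rescaling along a central commutator.

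\emph{Reduction to a nilpotent group.} By the finite generation part of Theorem~\ref{main}, or by the setting of \cite{Pan22}, we may assume $\Gamma$ is finitely generated. Since $\op{Ric}\geq 0$, $\Gamma$ has polynomial growth (Milnor), so by Gromov it has a torsion-free nilpotent subgroup $N$ of finite index. Passing to $N$ does not change the sets $\Omega(\tilde M,\tilde p,\cdot)$ up to finite extensions of the limit groups. Hence it suffices to show that $N$ is virtually abelian.

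\emph{Stability of the orbit picture.} I would first upgrade the hypothesis $d_{GH}\big((\tilde X,\tilde x,G(\tilde x)),(\Bbb R^t\times\tilde Y,(0,y),\Bbb R^t)\big)\leq\epsilon(n)$ to an exact statement: in every $(\tilde X,\tilde x,G)\in\Omega(\tilde M,\tilde p,N)$, the orbit $G(\tilde x)$ is an $\Bbb R^t$-factor and the identity component $G_0$ acts on it by translations.
\begin{enumerate}
\item[(1)] Each limit $G$ is a nilpotent Lie group. Its orbit is $\epsilon(n)$-close to $\Bbb R^t$, and for $\epsilon(n)$ small a gap/rigidity argument (equivariant Cheeger--Colding splitting) forces $G(\tilde x)$ to be a closed subgroup orbit homeomorphic to some $\Bbb R^{t'}$ with $t'=t$.
\item[(2)] The set $\Omega$ is connected by Lemma~\ref{con}, and the dimension $t'$ is locally constant on it because of the uniform $\epsilon(n)$-closeness. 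Hence $t'=t$ on all of $\Omega$.
\item[(3)] Lemma~\ref{split} then lets us identify the orbit with a Euclidean factor on which $G$ acts by translations, hence commutatively.
\end{enumerate}

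\emph{Contradiction for a nonabelian nilpotent group.} Suppose $N$ is not virtually abelian. Then there are $a,b\in N$ with $c=[a,b]$ central of infinite order, and $c^{m^2}$ is a bounded product of $a^{\pm m},b^{\pm m}$ up to a fixed finite correction. Thus $d(c^{m^2}\tilde p,\tilde p)\leq C m$, so $\langle c\rangle$ escapes at most like the square root of word length. Now choose scales $r_j=d(c^{m_j^2}\tilde p,\tilde p)\to\infty$ and pass to an equivariant cone.
\begin{enumerate}
\item[(4)] The limit of $\langle c\rangle$ is a nontrivial closed subgroup, since $c^{m_j^2}$ moves $\tilde p$ by exactly $1$ after rescaling. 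Its orbit lies in the $\Bbb R^t$-factor.
\item[(5)] At the same scale, the elements $a^{m_j},b^{m_j}$ move $\tilde p$ by a bounded amount, so they converge to elements $\alpha,\beta\in G$. Their commutator $[\alpha,\beta]$ equals the limit of $c^{m_j^2}$, which is a nontrivial translation.
\item[(6)] This contradicts the fact that $G$ acts on the orbit factor by commuting translations. One must also check that the ineffective kernel acting on $\tilde Y$ cannot absorb $[\alpha,\beta]$; this holds because $[\alpha,\beta]$ moves $\tilde x$ by $1$.
\end{enumerate}

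The main obstacle is the stability step: promoting $\epsilon(n)$-closeness to an exact $\Bbb R^t$ orbit with a translation action in every cone. This requires a uniform gap for nilpotent Lie group orbits near Euclidean spaces, together with the connectedness of $\Omega$. I would handle it by a contradiction and compactness argument over $\Omega$, in the style of \cite[Proposition 4.2]{Pan22}.
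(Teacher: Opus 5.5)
The paper gives no proof of this statement; it is quoted from \cite{Pan22} and used as a black box, after finite generation has already been established. Judged on its own, your argument has a genuine gap at step (5), and that step carries the whole proof.

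\textbf{The gap in step (5).} You take $r_j=d(c^{m_j^2}\tilde p,\tilde p)$ and assert that $a^{m_j},b^{m_j}$ have bounded rescaled displacement. That requires $d(a^{m_j}\tilde p,\tilde p)\leq C r_j$, i.e.\ a \emph{lower} bound on the escape of $c^{m^2}$ comparable to that of $a^m,b^m$. The estimate $d(c^{m^2}\tilde p,\tilde p)\leq Cm$ is an upper bound and points the wrong way.

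Worse, the structure you derive in (1)--(3) forces the opposite. Suppose that in every cone $G$ acts on the orbit by translations. Then every limit commutator $[\alpha,\beta]$ fixes $\tilde x$. Rescaling by $r_j$ and passing to limits in $\Omega$ then gives $d([\gamma,\sigma]\tilde p,\tilde p)=o(r)$, uniformly over $\gamma,\sigma$ with $d(\gamma\tilde p,\tilde p),d(\sigma\tilde p,\tilde p)\leq r$. Taking $\gamma=a^m$ and $\sigma=b^m$ yields $d(c^{m^2}\tilde p,\tilde p)=o(\max\{d(a^m\tilde p,\tilde p),d(b^m\tilde p,\tilde p)\})$. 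So at your scale $r_j$, at least one of $a^{m_j},b^{m_j}$ runs off to infinity and $\alpha,\beta$ do not exist. At the scale where they do exist, $[\alpha,\beta]$ fixes $\tilde x$.

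Proving (5) is therefore equivalent to the contradiction you are after, so the argument is circular at its key point. What is missing is an independent mechanism ruling out a central commutator that escapes sublinearly relative to $a^m,b^m$. Supplying that is the real content of the cited result.

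\textbf{Problems upstream.}
\begin{enumerate}
\item Step (3) invokes Lemma~\ref{split}. Its hypotheses are that isometries of the line-free factor fix the base point and that some tangent cone of the quotient contains no line. In the paper these come from \eqref{pole} and from the Claim in the proof of Theorem~\ref{main}. Neither is available for an arbitrary open manifold with $\op{Ric}\geq 0$, which is the generality of this statement.
\item Finite generation cannot be borrowed from Theorem~\ref{main}, because its hypotheses are not assumed here.
\item The upgrade from $\epsilon(n)$-closeness to an exact $\mathbb{R}^t$-orbit with a translation action is only asserted. The tool that would make it plausible is that $\Omega$ is invariant under rescaling, so the closeness holds at every scale of every cone; you never use this. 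Connectedness of $\Omega$ adds nothing, since $t$ is fixed by hypothesis.
\end{enumerate}
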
 
\begin{proof}[Proof of Theorem~\ref{main} for virtually abelian part]
By Lemma~\ref{con} and Theorem~\ref{pan2}, we only need to show that for any $(\tilde X, \tilde x, G)\in \Omega(\tilde M, \tilde p, \Gamma)$, there is an integer $k\in [0, n]$ such that 
$$d_{GH}((\tilde X, \tilde x, G(\tilde x)), (\Bbb R^k\times Y, (0, y), \Bbb R^k)\leq \epsilon(n),$$
where $Y$ depends on $\tilde X$ and may contain lines. 

Argue by contradiction. Assume that there are $\delta_i\to 0$ and a sequence of complete $n$-manifolds $M_i$ with $\op{Ric}_{M_i}\geq 0$ and  for $\tilde p_i\in \tilde M_i$, $r\geq 1$, \eqref{non-dec-1} and \eqref{al-cone} hold. For each $i$ there is $(\tilde X_i, \tilde x_i, G_i)\in \Omega(\tilde M_i, \tilde p_i, \Gamma_i)$ such that for any $s\in [0,n]$, any metric space $X$, 
\begin{equation}d_{GH}((\tilde X_i, \tilde x_i, G_i(\tilde x_i)), (\Bbb R^s\times X, (0, y), \Bbb R^s)> \epsilon(n). \label{contr}\end{equation}

Without loss of generality, we may assume that there are $r_j\to \infty$ such that $(r_j^{-1}\tilde M_i, \tilde p_i, \Gamma_i)\to (\tilde X_i, \tilde x_i, G_i)$,  $(r_j^{-1}M_i,  p_i)\to (X_i, x_i)$. By \cite{KW} and the finitely generated result, we may assume any $\Gamma_i=\pi_1(M_i)$ is a finitely generated nilpotent group with length $\leq n$.
Assume that
\begin{equation}\begin{array}[c]{ccc}
(\tilde X_i, \tilde x_i,  G_i) & \xrightarrow{eqGH} & (\Bbb R^{k}\times X, (0, x), G)\\
\downarrow\scriptstyle{ \pi_i}&&\downarrow\scriptstyle{\pi}\\
(X_i, x_i)&\xrightarrow{GH} &(\Bbb R^{l}\times Y, (0, y)), \label{1.1-1-4}
\end{array} \end{equation}
where $X$ and $Y$ split no $\Bbb R$-factor. By \eqref{al-cone}, any isometric action on $X$ fixes $x$ and by assumption $G_i$, $G$ are nilpotent groups with length $\leq n$. 

Claim: There exists a tangent cone at $y$ of $Y$ that splits no $\Bbb R$-factor. 

If not, for each $\tau$ small there is $r(\tau)>0$ such that for any $0<r\leq r(\tau)$, $B_r(x_i)$ is $(\tau+\Psi(i^{-1}), l+1)$-Euclidean. And  then $B_{rr_j}(p_i)$ is $(\tau+\Psi(i^{-1})+\Psi(j^{-1}), l+1)$-Euclidean. Fix $\tau<\frac{\delta}{10}$ in Lemma~\ref{split-2}, $r$ and $i$ such that $\Psi(i^{-1})<\frac{\delta}{10}$. And without loss of generality, assume that $rr_j>r_{j-1}$ and assume that $\tilde X_i=\Bbb R^{k_i}\times Z_i$ where $Z_i$ contains no line. Then for $j$ large, $B_{r_j}(\tilde p_i)$ is $(\Psi(j^{-1}), k_i)$-Euclidean. By \eqref{non-dec-1} and Lemma~\ref{split-2} for $r\in [r_{j-1}, r_j]$, as the discussion for Case 1 in the proof of finitely generated part,  we know that $B_{r_{j-1}(p_i)}$ is $(\Psi(\tau+\Psi(i^{-1})+\Psi(j^{-1})), l+1)$-Euclidean. Let $j\to \infty$, we have $B_1(x_i)$ is $(\Psi(\tau+\Psi(i^{-1})), l+1)$-Euclidean and thus $B_1(x)$ is $(\Psi(\tau), l+1)$-Euclidean. For arbitrary $\tau$, we may assume that  $r(\tau) r_j>r_{j-k_{\tau}}$, then the same argument gives that  $B_1(x)$ is $(\Psi(\tau), l+1)$-Euclidean which is contradict to $Y$ splits no line. 

 By Lemma~\ref{split} and above claim, we know that $G(0, x)=\Bbb R^s$ for some $s\in[0, k]$, a contradiction to \eqref{contr}.
\end{proof}

\begin{Rem} The same argument as for \cite[Theorem 1.6]{H} shows that for $(M, p)$ in Theorem~\ref{main} and for any tangent cone at infinity of $(M, p)$, $(Y, y)$, there is a length space $(Z, z^*)$ with $z^*$ a pole such that $d_{GH}((Y, y), (Z, z^*))<\Psi(\delta)$. 
\end{Rem}

\end{document}